\newtheorem{Theorem}{Theorem}[section]
\newtheorem{Definition}[Theorem]{Definition}
\newtheorem{Corollary}[Theorem]{Corollary}
\newtheorem{Lemma}[Theorem]{Lemma}
\newtheorem{Remark}[Theorem]{Remark}
\definecolor{VeryLightBlue}{rgb}{0.9,0.9,1}
\definecolor{LightBlue}{rgb}{0.8,0.8,1}
\definecolor{MidBlue}{rgb}{0.5,0.5,1}
\definecolor{DarkBlue}{rgb}{0,0,0.6}
\definecolor{Blue}{rgb}{0,0,1}
\definecolor{Gold}{rgb}{1,0.843,0}
\definecolor{LightGreen}{rgb}{0.88,1,0.88}
\definecolor{MidGreen}{rgb}{0.6,1,0.6}
\definecolor{DarkGreen}{rgb}{0,0.6,0}
\definecolor{VeryLightYellow}{rgb}{1,1,0.9}
\definecolor{LightYellow}{rgb}{1,1,0.6}
\definecolor{MidYellow}{rgb}{1,1,0.5}
\definecolor{DarkYellow}{rgb}{1,1,0.2}
\definecolor{DarkPurple}{rgb}{.6,0,1}
\definecolor{Red}{rgb}{1,0,0}
\definecolor{VeryLightRed}{rgb}{1,0.9,0.9}
\definecolor{LightRed}{rgb}{1,0.8,0.8}
\definecolor{MidRed}{rgb}{1,0.55,0.55}
\long\def\delete#1{}
\newcommand{\be}{\begin{equation}}
\newcommand{\ee}{\end{equation}}
\newcommand{\bea}{\begin{eqnarray}}
\newcommand{\eea}{\end{eqnarray}}
\newcommand{\bean}{\begin{eqnarray*}}
\newcommand{\eean}{\end{eqnarray*}}
\def\non{\nonumber}
\def\qed{\hfill$\Box$\vspace{11pt}}
\def\diam{{\rm diam}}
\def\span{{\rm span}}
\def\rn{{\rm rn}}
\def\d{\delta}
\def\l{\lambda}
\def\ve{\varepsilon}
\def\({\left(}
\def\){\right)}
\def\[{\left[}
\def\]{\right]}
\begin{document}

\title{Radio number of trees}
\author{Devsi Bantva \\ Department of Mathematics \\ Lukhdhirji Engineering College, Morvi 363 642, Gujarat, India \\ \textit{devsi.bantva@gmail.com} \\ \\
Samir Vaidya \\ Department of Mathematics \\ Saurashtra University, Rajkot 360 005, Gujarat, India \\  \textit{samirkvaidya@yahoo.co.in} \\ \\
Sanming Zhou \\
School of Mathematics and Statistics\\
The University of Melbourne,
Parkville, VIC 3010, Australia\\
\textit{smzhou@ms.unimelb.edu.au}}


\date{}
\openup 0.48\jot
\maketitle

\begin{abstract}
A radio labeling of a graph $G$ is a mapping $f: V(G) \rightarrow \{0, 1, 2, \ldots\}$ such that $|f(u)-f(v)|\geq \diam(G) + 1 - d(u,v)$ for every pair of distinct vertices $u, v$ of $G$, where $\diam(G)$ is the diameter of $G$ and $d(u,v)$ the distance between $u$ and $v$ in $G$. The radio number of $G$ is the smallest integer $k$ such that $G$ has a radio labeling $f$ with $\max\{f(v) : v \in V(G)\} = k$. We give a necessary and sufficient condition for a lower bound on the radio number of trees to be achieved, two other sufficient conditions for the same bound to be achieved by a tree, and an upper bound on the radio number of trees. Using these, we determine the radio number for three families of trees.

\emph{Keywords}: Channel assignment, radio labeling, radio number, trees

\emph{AMS Subject Classification (2010)}: 05C78, 05C15
\end{abstract}

\section{Introduction}
\label{sec:int}

In a graph model for the channel assignment problem, the transmitters are represented by the vertices of a graph; two vertices are adjacent or at distance two apart in the graph if the corresponding transmitters are \emph{very close} or \emph{close} to each other. Motivated by this problem Griggs and Yeh \cite{Griggs} introduced the following distance-two labeling problem: An \emph{$L(2,1)$-labeling} of a graph $G=(V(G),E(G))$ is a function $f$ from the vertex set $V(G)$ to the set of nonnegative integers such that $|f(u)-f(v)|\geq2$ if $d(u,v)=1$ and $|f(u)-f(v)|\geq1$ if $d(u,v)=2$, where $d(u, v)$ is the distance between $u$ and $v$ in $G$. The \emph{span} of $f$ is defined as $\max\{f(u)-f(v): u, v \in V(G)\}$, and the minimum span over all $L(2,1)$-labelings of $G$ is called the \emph{$\lambda$-number} of $G$, denoted by $\l(G)$. The $L(2,1)$-labeling and other distance-two labeling problems have been studied by many researchers in the past two decades; see \cite{Calamoneri} and \cite{Yeh1}.

It has been observed that interference among transmitters may go beyond two levels. Motivated by the channel assignment problem for FM radio stations, Chartrand \emph{et al.} \cite{Chartrand1} introduced the following radio labeling problem. Denote by $\diam(G)$ the \emph{diameter} of $G$, that is, the maximum distance among all pairs of vertices in $G$.

\begin{Definition}
{\em
A \emph{radio labeling} of a graph $G$ is a mapping $f: V(G) \rightarrow \{0, 1, 2, \ldots\}$ such that for every pair of distinct vertices $u, v$ of $G$,
$$
d(u,v) + |f(u)-f(v)| \geq \diam(G) + 1.
$$
The integer $f(u)$ is called the \emph{label} of $u$ under $f$, and the \emph{span} of $f$ is defined as $\span(f) = \max \{|f(u)-f(v)|: u, v \in V(G)\}$. The \emph{radio number} of $G$ is defined as
$$
\rn(G) := \min_{f} \span(f)
$$
with minimum over all radio labelings $f$ of $G$. A radio labeling $f$ of $G$ is \emph{optimal} if $\span(f) = \rn(G)$.
}
\end{Definition}

Observe that any radio labeling should assign different labels to distinct vertices. Note also that any optimal radio labeling must assign $0$ to some vertex. In the case when $\diam(G) = 2$ we have $\rn(G) = \l(G)$.

Determining the radio number of a graph is an interesting but challenging problem. So far the radio number is known only for a handful families of graphs (see \cite{Chartrand} for a survey). Chartrand \emph{et al.} \cite{Chartrand1,Chartrand2,Zhang} studied the radio labeling problem for paths and cycles, and this was continued by Liu and Zhu \cite{Liu} who gave the exact value of the radio number for paths and cycles. We emphasise that even in these innocent-looking cases it was challenging to determine the radio number. In \cite{Daphne2,Daphne3}, Liu and Xie discussed the radio number for the square of paths and cycles. In \cite{Vaidya1,Vaidya2,Vaidya3}, Vaidya and Bantva studied the radio number for the total graph of paths, the strong product of $P_{2}$ with $P_{n}$ and linear cacti. In \cite{Benson}, Benson \emph{et al.} determined the radio number of all graphs of order $n$ and diameter $n-2$, where $n \ge 2$ is an integer. Bhatti \emph{et al.} studied \cite{Bhatti} the radio number of wheel-like graphs, while \v{C}ada \emph{et al.} discussed \cite{Cada} a general version of radio labelings of distance graphs. In \cite{Daphne1}, Liu gave a lower bound on the radio number for trees and presented a class of trees achieving this bound. In \cite{Li}, Li \emph{et al.} determined the radio number for complete $m$-ary trees. In \cite{Tuza}, Hal\'asz and Tuza determined the radio number of internally regular complete trees among other things. (A few distance-three labeling problems for such trees of even diameters were studied in \cite{KLZ}.) In spite of these efforts, the problem of determining the exact value of the radio number for trees is still open, and it seems unlikely that a universal formula exists for all trees.

Inspired by the work in \cite{Li}, in this paper we first give a necessary and sufficient condition for a lower bound \cite[Theorem 3]{Daphne1} (see also Lemma \ref{thm:lb}) on the radio number of trees to be achieved (Theorem \ref{thm:ub}), together with an optimal radio labeling. We also give two sufficient conditions for this bound to be achieved (Theorem \ref{thm:cor1}) and obtain an upper bound on the radio number of trees (Theorem \ref{thm:ub1}). These results provide methodologies for obtaining the exact values of or upper bounds on the radio number of trees, and using them we determine in Section \ref{sec:three} the radio number for three families of trees, namely banana trees, firecracker trees, and caterpillars in which all vertices on the spine have the same degree. Our result for caterpillars implies the result in \cite{Liu} for paths. As concluding remarks, in Section \ref{sec:rem} we demonstrate that the results on the radio numbers of internally regular complete trees (\cite[Theorem 1]{Tuza}) and complete $m$-ary trees for $m \ge 3$ (\cite[Theorem 2]{Li}) can be obtained by using our method.

\section{Preliminaries}
\label{sec:prep}

We follow \cite{West} for graph-theoretic definition and notation. A \emph{tree} is a connected graph that contains no cycle. In \cite{Daphne1} the \emph{weight} of $T$ from $v \in V(T)$ is defined as $w_{T}(v) = \sum_{u \in V(T)} d(u,v)$ and the \emph{weight} of $T$ as $w(T)$ = min\{$w_{T}(v)$ : $v \in V(T)$\}. A vertex $v \in V(T)$ is a \emph{weight centre} \cite{Daphne1} of $T$ if $w_{T}(v)$ = $w(T)$. Denote by $W(T)$ the set of weight centres of $T$. It was proved in \cite[Lemma 2]{Daphne1} that every tree $T$ has either one or two weight centres, and $T$ has two weight centres, say, $W(T) = \{w, w'\}$, if and only if $w$ and $w'$ are adjacent and $T - ww'$ consists of two equal-sized components. We view $T$ as rooted at its weight centre $W(T)$: if $W(T) = \{w\}$, then $T$ is rooted at $w$; if $W(T) = \{w, w'\}$ (where $w$ and $w'$ are adjacent), then $T$ is rooted at $w$ and $w'$ in the sense that both $w$ and $w'$ are at level $0$. In either case, if in $T$ the unique path from a weight centre to a vertex $v \not \in W(T)$ passes through a vertex $u$ (possibly with $u = v$), then $u$ is called an \emph{ancestor} of $v$, and $v$ is called a \emph{descendent} of $u$. If $v$ is a descendent of $u$ and is adjacent to $u$, then $v$ is a \emph{child} of $u$. Let $u \not \in W(T)$ be adjacent to a weight centre. The subtree induced by $u$ and all its descendent is called a \emph{branch} at $u$. Two branches are called \emph{different} if they are at two vertices adjacent to the same weight centre, and \emph{opposite} if they are at two vertices adjacent to different weight centres. Note that the latter case occurs only when $T$ has two weight centres. Define
$$
L(u) := \min\{d(u, x): x \in W(T)\},\; u \in V(T)
$$
to indicate the \emph{level} of $u$ in $T$. Define the \emph{total level} of $T$ as
$$
L(T) := \sum_{u \in V(T)} L(u).
$$
For any $u, v \in V(T)$, define
$$
\phi(u,v) := \max\{L(x): \mbox{$x$ is a common ancestor of $u$ and $v$}\}
$$
$$
\delta(u,v) := \left\{
\begin{array}{ll}
1, & \mbox{if $W(T) = \{w, w'\}$ and $P_{uv}$ contains the edge $ww'$} \\ [0.2cm]
0, & \mbox{otherwise.}
\end{array}
\right.
$$

\begin{Lemma} \label{obs}
Let $T$ be a tree with diameter $d \ge 2$. Then for any $u, v \in V(T)$ the following hold:
\begin{enumerate}[\rm (a)]
  \item $\phi(u,v) \geq 0$;
  \item $\phi(u,v) = 0$ if and only if $u$ and $v$ are in different or opposite branches;
  \item $\delta(u,v) = 1$ if and only if $T$ has two weight centres and $u$ and $v$ are in opposite branches;
  \item the distance $d(u,v)$ in $T$ between $u$ and $v$ can be expressed as
\be
\label{eq:dist}
d(u,v) = L(u) + L(v) - 2 \phi(u,v) + \delta(u,v).
\ee
\end{enumerate}
\end{Lemma}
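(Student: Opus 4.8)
The plan is to verify parts (a)--(d) of Lemma~\ref{obs} by reasoning about the rooted structure of $T$. For part (a), I would observe that every vertex has at least one common ancestor with any other vertex, namely a weight centre itself: when $W(T)=\{w\}$, the root $w$ is an ancestor of all vertices (taking ancestor in the reflexive sense used in the paper), so $\phi(u,v)\ge L(w)=0$; when $W(T)=\{w,w'\}$, either $u$ and $v$ lie in branches hanging off the same weight centre, which is then a common ancestor at level $0$, or they lie in opposite branches, in which case they have no common ancestor, but I must check the convention---here $\phi$ is a max over a nonempty set precisely because we may always take one of $w,w'$; alternatively the paper's definition of ancestor forces the common ancestor set to be nonempty. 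In all cases $\phi(u,v)\ge 0$.

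For part (b), I would argue both directions. If $u$ and $v$ lie in the same branch at a vertex $x$ adjacent to a weight centre, then $x$ is a common ancestor with $L(x)=1>0$, so $\phi(u,v)\ge 1$. Conversely, if $\phi(u,v)>0$ then there is a common ancestor $x$ with $L(x)\ge 1$, so $x\notin W(T)$; the unique path from $W(T)$ to $x$ enters a single branch, and since both $u$ and $v$ are descendants of $x$ they both lie in that same branch---hence they are neither in different branches (at the same weight centre) nor in opposite branches. This is just the contrapositive of ``different or opposite'' $\Rightarrow$ $\phi=0$, and combined with part (a) it gives the equivalence. Part (c) is essentially the definition of $\delta$ unwound: $\delta(u,v)=1$ means $W(T)=\{w,w'\}$ and the $u$--$v$ path $P_{uv}$ uses the edge $ww'$; the path uses $ww'$ exactly when $u$ is a descendant of $w$ (but not of $w'$) and $v$ is a descendant of $w'$ (but not of $w$), i.e.\ exactly when $u$ and $v$ are in opposite branches. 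I would spell out that if both were on the same side, the path would not cross $ww'$, and if one of them equals a weight centre the relevant branch condition still makes this precise.

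Part (d) is the substantive identity and the main thing to get right. The strategy is to split into cases according to $\delta(u,v)$. If $\delta(u,v)=0$: by part (c), $u$ and $v$ are not in opposite branches, so the path $P_{uv}$ stays within the subtree rooted at a single weight centre, and the lowest common ancestor of $u$ and $v$ (in the usual rooted-tree sense) is exactly a vertex $x$ realizing $\phi(u,v)=L(x)$; then the $u$--$v$ path decomposes as the path from $u$ up to $x$ (length $L(u)-L(x)$) followed by the path from $x$ down to $v$ (length $L(v)-L(x)$), giving $d(u,v)=L(u)+L(v)-2\phi(u,v)$, which matches \eqref{eq:dist} since $\delta=0$. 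If $\delta(u,v)=1$: then $u$ and $v$ are in opposite branches, $\phi(u,v)=0$ by part (b), and the $u$--$v$ path goes from $u$ up to $w$ (length $L(u)$), across the edge $ww'$ (length $1$), then down to $v$ (length $L(v)$), so $d(u,v)=L(u)+L(v)+1 = L(u)+L(v)-2\phi(u,v)+\delta(u,v)$. The one subtlety worth stating carefully is the boundary behaviour when $u$ or $v$ is itself a weight centre or itself the realizing ancestor $x$; in those cases $L(u)=0$ or $L(u)=L(x)$ and the length formulas for the up-segments degenerate to $0$, which is still consistent. I expect the main obstacle to be purely expository: making the ``lowest common ancestor realizes $\phi$'' claim airtight and handling the degenerate cases uniformly, rather than any genuine difficulty---the geometry of paths in a rooted tree does all the real work.
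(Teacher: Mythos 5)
The paper gives no proof of Lemma \ref{obs} at all --- it is stated as a bare observation --- so there is nothing to compare your argument against; your proof is correct and is the natural one. Parts (a)--(c) unwind the definitions of ancestor, branch and $\delta$, and part (d) is the standard decomposition of $P_{uv}$ through the maximum-level common ancestor (which is the lowest common ancestor in the rooted sense), plus the extra edge $ww'$ in the opposite-branch case; the identity $L(u)=L(x)+d(u,x)$ for an ancestor $x$ of $u$ is what makes the two segment lengths come out as $L(u)-\phi(u,v)$ and $L(v)-\phi(u,v)$. The one point you flag but do not fully resolve is the convention for vertices of $W(T)$ itself: the paper defines ``ancestor of $v$'' only for $v\notin W(T)$, so for $\phi(u,v)$ to be the maximum over a nonempty set (and for (b) and (c) to be literal equivalences) one must agree that each weight centre is an ancestor of every vertex and treat $u\in W(T)$ as a separate degenerate case; with that convention your boundary remarks go through and \eqref{eq:dist} holds in all cases.
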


\section{Radio number of trees}
\label{sec:tree}

A radio labeling of $T$ is an injective mapping $f$ from $V(T)$ to the set of nonnegative integers; we can always assume that $f$ assigns $0$ to some vertex. Thus $f$ induces a linear order of the vertices of $T$, namely $V(T) = \{u_{0}, u_{1}, \ldots, u_{p-1}\}$ (where $p=|V(T)|$) defined by
$$
0 = f(u_{0}) < f(u_{1}) <  \cdots < f(u_{p-1}) = \span(f).
$$
Define
$$
\ve(T) := \left\{
\begin{array}{ll}
1, & \mbox{if $T$ has only one weight centre} \\ [0.3cm]
0, & \mbox{if $T$ has two (adjacent) weight centres.}
\end{array}
\right.
$$

The following result is essentially the same as \cite[Theorem 3]{Daphne1}, because when $T$ has a unique weight centre, say $w$, we have $L(T) = w_{T}(w) = w(T)$, and when $T$ has two weight centres, say $w$ and $w'$, the number of vertices in each of the two components of $T - ww'$ is equal to $p/2$ (\cite[Lemma 2]{Daphne1}) and $L(T) + p/2 = w_{T}(w) = w_{T}(w') = w(T)$. However, we give a proof of Lemma \ref{thm:lb} as it will be used in the proof of Theorem \ref{thm:ub} and subsequent discussion.

\begin{Lemma}
\label{thm:lb}
(\cite[Theorem 3]{Daphne1})
Let $T$ be a tree with order $p$ and diameter $d \ge 2$. Denote $\ve = \ve(T)$. Then
\be
\label{eq:lb}
\rn(T) \ge (p-1)(d+\ve) - 2 L(T) + \ve.
\ee
\end{Lemma}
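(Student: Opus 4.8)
The plan is to work with the linear order $u_0, u_1, \ldots, u_{p-1}$ induced by an arbitrary radio labeling $f$, and to bound the span $\span(f) = f(u_{p-1}) - f(u_0)$ from below by telescoping the consecutive gaps $f(u_{i+1}) - f(u_i)$. The radio condition applied to the pair $u_i, u_{i+1}$ gives $f(u_{i+1}) - f(u_i) \ge d + 1 - d(u_i, u_{i+1})$, so summing over $i = 0, \ldots, p-2$ yields
\be
\span(f) \ge (p-1)(d+1) - \sum_{i=0}^{p-2} d(u_i, u_{i+1}).
\non
\ee
The core of the argument is therefore to give a good upper bound on $\sum_{i=0}^{p-2} d(u_i, u_{i+1})$ in terms of $L(T)$ and $\ve$.

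For this I would substitute the distance formula \eqref{eq:dist} from Lemma \ref{obs}(d): $d(u_i,u_{i+1}) = L(u_i) + L(u_{i+1}) - 2\phi(u_i,u_{i+1}) + \delta(u_i,u_{i+1})$. Summing, the level terms $\sum_i \big( L(u_i) + L(u_{i+1}) \big)$ equal $2L(T) - L(u_0) - L(u_{p-1})$ (each vertex is counted twice except the two endpoints of the order). Since $\phi \ge 0$ by Lemma \ref{obs}(a), dropping the $-2\phi$ terms only increases the bound, so $\sum_i d(u_i,u_{i+1}) \le 2L(T) - L(u_0) - L(u_{p-1}) + \sum_i \delta(u_i, u_{i+1})$. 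Plugging back in gives
\be
\span(f) \ge (p-1)(d+1) - 2L(T) + L(u_0) + L(u_{p-1}) - \sum_{i=0}^{p-2}\delta(u_i,u_{i+1}).
\non
\ee
It remains to handle the two cases by the value of $\ve$. When $T$ has one weight centre, $\delta \equiv 0$ and $\ve = 1$; since the target bound is $(p-1)(d+1) - 2L(T) + 1$, it suffices to observe $L(u_0) + L(u_{p-1}) \ge 1$, which holds because at most one vertex (the weight centre itself) has level $0$, so not both endpoints can. When $T$ has two weight centres, $\ve = 0$ and the target is $(p-1)d - 2L(T)$; here I would rewrite $(p-1)(d+1) - \sum_i \delta(u_i,u_{i+1}) = (p-1)d + (p-1) - \sum_i \delta(u_i,u_{i+1})$ and argue that the number of consecutive pairs crossing the central edge $ww'$ is at most $p-1$, in fact show the slack $L(u_0) + L(u_{p-1}) + (p-1) - \sum_i \delta(u_i,u_{i+1}) \ge 0$, which is immediate since $\sum_i \delta \le p-1$ and levels are nonnegative. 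Combining the two cases gives exactly \eqref{eq:lb}.

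The step I expect to be the main obstacle — or at least the one needing the most care — is the bookkeeping that $\sum_i \big(L(u_i)+L(u_{i+1})\big) = 2L(T) - L(u_0) - L(u_{p-1})$ together with the correct case analysis on $L(u_0)+L(u_{p-1})$ relative to $\ve$; one must be careful that the weight centre(s) may or may not be endpoints of the induced order, and that in the two-centre case both weight centres sit at level $0$. Everything else (the telescoping, dropping $\phi$, bounding $\sum \delta$) is routine once the distance identity is in hand.
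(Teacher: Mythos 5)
Your proposal is correct and follows essentially the same route as the paper's proof: telescoping the consecutive gaps via the radio condition, substituting the distance identity \eqref{eq:dist}, dropping the nonnegative $\phi$ terms, and splitting into the one-centre case (where $L(u_0)+L(u_{p-1})\ge 1$ and $\delta\equiv 0$) and the two-centre case (where $\sum_i\delta(u_i,u_{i+1})\le p-1$). The bookkeeping identity $\sum_{i}\bigl(L(u_i)+L(u_{i+1})\bigr)=2L(T)-L(u_0)-L(u_{p-1})$ and both case arguments match the paper exactly.
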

\begin{proof}~It suffices to prove that any radio labeling of $T$ has span no less than the right-hand side of (\ref{eq:lb}). Suppose that $f$ is an arbitrary radio labeling of $T$. We order the vertices of $T$ such that $0 = f(u_{0}) < f(u_{1}) < f(u_{2}) < \cdots < f(u_{p-1})$. Since $f$ is a radio labeling, we have $f(u_{i+1}) - f(u_{i}) \geq (d+1)-d(u_{i},u_{i+1})$ for $0 \leq i \leq p-2$. Summing up these $p-1$ inequalities, we obtain
\be
\label{eq:sumup}
\span(f) = f(u_{p-1}) \geq (p-1)(d+1) - \sum_{i = 0}^{p-2} d(u_{i}, u_{i+1}).
\ee

\textsf{Case 1}: $T$ has one weight centre.

In this case, we have $\d(u_{i}, u_{i+1}) = 0$ for $0 \le i \le p-2$ by the definition of the function $\d$. Since $T$ has only one weight centre, $u_{0}$ and $u_{p-1}$ cannot be the root (weight centre) of $T$ simultaneously. Hence $L(u_{0}) + L(u_{p-1}) \ge 1$. Thus, by (\ref{eq:dist}) and Lemma \ref{obs}(a),
\bean
\sum_{i=0}^{p-2} d(u_{i},u_{i+1}) & = & \sum_{i=0}^{p-2} (L(u_{i}) + L(u_{i+1}) - 2\phi(u_{i}, u_{i+1})) \\
& = & 2 L(T) - L(u_{0}) - L(u_{p-1}) - 2 \sum_{i=0}^{p-2}\phi(u_{i}, u_{i+1}) \\
& \le & 2 L(T) - 1.
\eean
This together with (\ref{eq:sumup}) yields $\span(f) = f(u_{p-1}) \geq (p-1)(d+1) - 2 L(T) + 1$.

\textsf{Case 2}: $T$ has two weight centres.

By Lemma \ref{obs}(a), we have $\phi(u_{i}, u_{i+1}) \ge 0$ for $0 \le i \le p-2$. We also have $\d(u_{i}, u_{i+1}) \le 1$ for $0 \le i \le p-2$.
Since $L(u_{0}) \ge 0$ and $L(u_{p-1}) \ge 0$, by (\ref{eq:dist}) we then have
\bean
\sum_{i=0}^{p-2}d(u_{i},u_{i+1}) & = & 2 L(T) - L(u_{0}) - L(u_{p-1}) - 2 \sum_{i=0}^{p-2}\phi(u_{i}, u_{i+1}) + \sum_{i=0}^{p-2} \d(u_{i}, u_{i+1}) \\
& \leq & 2L(T) + (p-1).
\eean
Combining this with (\ref{eq:sumup}) we obtain $\span(f) = f(u_{p-1}) \ge (p-1)d - 2L(T)$.
\qed
\end{proof}

The next result gives a necessary and sufficient condition for the equality in (\ref{eq:lb}) along with an optimal radio labeling. It will be crucial for our subsequent discussion.

\begin{Theorem}
\label{thm:ub}
Let $T$ be a tree with order $p$ and diameter $d \ge 2$. Denote $\ve = \ve(T)$. Then
\be
\label{eq:ub}
\rn(T) = (p-1)(d+\ve) - 2 L(T) + \ve
\ee
holds if and only if there exists a linear order $u_0, u_1, \ldots, u_{p-1}$ of the vertices of $T$ such that
\begin{enumerate}[\rm (a)]
\item $u_0 = w$ and $u_{p-1} \in N(w)$ when $W(T) = \{w\}$, and $\{u_0, u_{p-1}\} = \{w, w'\}$ when $W(T) = \{w, w'\}$;
\item the distance $d(u_{i}, u_{j})$ between $u_i$ and $u_j$ in $T$ satisfies
\be
\label{eq:dij}
d(u_{i}, u_{j}) \ge \sum_{t = i}^{j-1} (L(u_t)+L(u_{t+1})) - (j - i)(d+\ve) + (d+1),\;\, 0 \le i < j \le p-1.
\ee
\end{enumerate}
Moreover, under this condition the mapping $f$ defined by
\be
\label{eq:f0}
f(u_{0}) = 0
\ee
\be
\label{eq:f}
f(u_{i+1}) = f(u_{i}) - L(u_{i+1}) - L(u_{i}) + (d + \ve),\;\, 0 \leq i \leq p-2
\ee
is an optimal radio labeling of $T$.
\end{Theorem}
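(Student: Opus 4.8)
The plan is to prove the two implications separately, with the ``if'' direction essentially amounting to verifying that the explicit labeling $f$ in \eqref{eq:f0}--\eqref{eq:f} is a valid radio labeling of span exactly the right-hand side of \eqref{eq:ub}, and the ``only if'' direction amounting to extracting conditions (a) and (b) from an equality analysis of the proof of Lemma \ref{thm:lb}.

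\textbf{The ``only if'' direction.} Suppose \eqref{eq:ub} holds and let $f$ be an optimal radio labeling, inducing the order $u_0, u_1, \ldots, u_{p-1}$ with $0 = f(u_0) < \cdots < f(u_{p-1}) = \rn(T)$. Revisiting the proof of Lemma \ref{thm:lb}, equality in \eqref{eq:lb} forces equality in every inequality used there. In particular, from \eqref{eq:sumup} we get $f(u_{i+1}) - f(u_i) = (d+1) - d(u_i, u_{i+1})$ for all $i$, which after substituting \eqref{eq:dist} gives precisely the recursion \eqref{eq:f} (using $\ve = 1$, $\delta = 0$, $\phi(u_i,u_{i+1}) = 0$ in the one-centre case, and $\ve = 0$, $\phi(u_i,u_{i+1}) = 0$, $\delta(u_i,u_{i+1}) = 1$ in the two-centre case). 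Also, equality forces $\sum \phi(u_i,u_{i+1}) = 0$, hence $\phi(u_i,u_{i+1}) = 0$ for each $i$ by Lemma \ref{obs}(a); in Case 1, equality additionally forces $L(u_0) + L(u_{p-1}) = 1$, i.e.\ one endpoint is the weight centre and the other is at level $1$, which is exactly (a) for a single centre; in Case 2, equality forces $L(u_0) = L(u_{p-1}) = 0$ and $\delta(u_i, u_{i+1}) = 1$ for all $i$, so $\{u_0, u_{p-1}\} = \{w,w'\}$, which is (a) for two centres. Finally, condition (b): since $f$ is a radio labeling, $f(u_j) - f(u_i) \ge (d+1) - d(u_i, u_j)$ for any $i < j$; summing \eqref{eq:f} telescopically gives $f(u_j) - f(u_i) = \sum_{t=i}^{j-1}(d+\ve) - \sum_{t=i}^{j-1}(L(u_t)+L(u_{t+1})) = (j-i)(d+\ve) - \sum_{t=i}^{j-1}(L(u_t)+L(u_{t+1}))$, and combining the two inequalities yields \eqref{eq:dij}.

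\textbf{The ``if'' direction.} Conversely, assume a linear order satisfying (a) and (b) exists, and define $f$ by \eqref{eq:f0}--\eqref{eq:f}. First I would check $f$ is strictly increasing along the order: consecutive differences are $f(u_{i+1}) - f(u_i) = (d+\ve) - L(u_i) - L(u_{i+1}) = (d+1) - d(u_i, u_{i+1})$ in both cases by \eqref{eq:dist} with $\phi(u_i,u_{i+1}) = 0$ (guaranteed because, by (a) and the structure of the order, one should verify consecutive vertices lie in different/opposite branches — this needs a small argument, or one simply invokes (b) with $j = i+1$, which gives $d(u_i,u_{i+1}) \le L(u_i) + L(u_{i+1}) - (d+\ve) + (d+1)$, while $d(u_i,u_{i+1}) \ge L(u_i)+L(u_{i+1}) - 0 + \delta$ from \eqref{eq:dist} and Lemma \ref{obs}(a); a short case check on $\ve,\delta$ then pins down the difference and shows it is positive since $d(u_i,u_{i+1}) \le d$). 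Next, compute the span: telescoping \eqref{eq:f} from $u_0$ to $u_{p-1}$ gives $\span(f) = f(u_{p-1}) = (p-1)(d+\ve) - \sum_{i=0}^{p-2}(L(u_i)+L(u_{i+1})) = (p-1)(d+\ve) - (2L(T) - L(u_0) - L(u_{p-1}))$, and by (a) we have $L(u_0) + L(u_{p-1}) = \ve$ (equal to $1$ with one centre, $0$ with two), so $\span(f) = (p-1)(d+\ve) - 2L(T) + \ve$, matching \eqref{eq:ub}. Finally, verify the radio condition $d(u_i,u_j) + |f(u_i) - f(u_j)| \ge d+1$ for all $i < j$: as above $f(u_j) - f(u_i) = (j-i)(d+\ve) - \sum_{t=i}^{j-1}(L(u_t)+L(u_{t+1}))$, so the radio inequality rearranges to exactly \eqref{eq:dij}, which holds by hypothesis (b). Since $f$ is injective with span equal to the lower bound of Lemma \ref{thm:lb}, it is an optimal radio labeling, proving both \eqref{eq:ub} and the ``Moreover'' claim.

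\textbf{Main obstacle.} The routine parts are the telescoping computations and the span evaluation; the delicate point is handling the $\phi$ and $\delta$ terms uniformly across the one-centre and two-centre cases, and in the ``if'' direction confirming that $f$ is genuinely injective (strictly monotone) rather than merely satisfying the pairwise radio inequalities — this is where one must combine \eqref{eq:dij} at $j = i+1$ with \eqref{eq:dist} and the bound $d(u_i,u_{i+1}) \le d$, and carefully track the value of $\ve$. A secondary subtlety is making sure the equality analysis in the ``only if'' direction correctly identifies $L(u_0) + L(u_{p-1}) = \ve$ and the branch condition, i.e.\ that $\phi(u_i, u_{i+1}) = 0$ for all consecutive pairs, which by Lemma \ref{obs}(b) says consecutive vertices always lie in different or opposite branches.
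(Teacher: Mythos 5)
Your proposal is correct and follows essentially the same route as the paper: necessity via forcing equality in every inequality of the proof of Lemma \ref{thm:lb} (with the $\span(f)-f$ reversal to normalise $L(u_0)=0$ in the one-centre case), and sufficiency by telescoping \eqref{eq:f} and observing that the radio condition for the pair $(u_i,u_j)$ rearranges exactly to \eqref{eq:dij}; the paper merely packages the telescoping through the auxiliary quantities $x_i$ and Lemma \ref{lem3}. One small slip: in your monotonicity aside both inequality directions are written backwards --- \eqref{eq:dij} with $j=i+1$ gives $d(u_i,u_{i+1})\ge L(u_i)+L(u_{i+1})+1-\ve$ while \eqref{eq:dist} with $\phi\ge 0$, $\delta\le 1-\ve$ gives the reverse bound, and it is this (correctly oriented) sandwich that pins down $\phi(u_i,u_{i+1})=0$ and makes the increments positive.
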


We need some preparations in order to prove Theorem \ref{thm:ub}. Given a radio labeling $f$ of a tree $T$, define
$$
x_{i} := f(u_{i+1}) - f(u_{i}) + L(u_{i+1}) + L(u_{i}) - (d + \ve),\;\, 0 \leq i \leq p-2,
$$
where $p=|V(G)|$, $d=\diam(T)$ and $\ve = \ve(T)$ as before. Obviously, the values of $x_i$'s rely on $f$.

\begin{Lemma}
\label{lem:xi}
$x_{i} \geq 2 \phi(u_{i},u_{i+1})$ and hence $x_{i} \geq 0$, $0 \le i < p-1$.
\end{Lemma}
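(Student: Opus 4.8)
The plan is to unwind the definition of $x_i$ using the distance formula (\ref{eq:dist}) of Lemma \ref{obs}(d) and then exploit the fact that $f$ is a radio labeling. Writing out the definition, $x_i = \big(f(u_{i+1}) - f(u_i)\big) + L(u_{i+1}) + L(u_i) - (d+\ve)$. Since $f$ is a radio labeling and $f(u_{i+1}) > f(u_i)$, we have $f(u_{i+1}) - f(u_i) = |f(u_{i+1}) - f(u_i)| \ge (d+1) - d(u_i, u_{i+1})$. Substituting the expression $d(u_i,u_{i+1}) = L(u_i) + L(u_{i+1}) - 2\phi(u_i,u_{i+1}) + \delta(u_i,u_{i+1})$ from (\ref{eq:dist}) gives
$$
x_i \ge (d+1) - d(u_i,u_{i+1}) + L(u_{i+1}) + L(u_i) - (d+\ve) = 1 - \ve + 2\phi(u_i,u_{i+1}) - \delta(u_i,u_{i+1}).
$$
So the claim $x_i \ge 2\phi(u_i,u_{i+1})$ reduces to showing $1 - \ve - \delta(u_i,u_{i+1}) \ge 0$, i.e. $\delta(u_i,u_{i+1}) \le 1 - \ve$.

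The key step is therefore the case analysis on the number of weight centres, which controls both $\ve$ and the possible values of $\delta$. If $T$ has a unique weight centre, then $\ve = 1$ and, by the definition of $\delta$ (which is $0$ unless $W(T)$ has two elements), $\delta(u_i,u_{i+1}) = 0$; hence $1 - \ve - \delta(u_i,u_{i+1}) = 0 \ge 0$. If $T$ has two weight centres, then $\ve = 0$ and $\delta(u_i,u_{i+1}) \in \{0,1\}$ by its definition, so $1 - \ve - \delta(u_i,u_{i+1}) = 1 - \delta(u_i,u_{i+1}) \ge 0$. In both cases we conclude $x_i \ge 2\phi(u_i,u_{i+1})$, and since $\phi(u_i,u_{i+1}) \ge 0$ by Lemma \ref{obs}(a), it follows that $x_i \ge 0$.

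There is essentially no hard obstacle here: the whole argument is a one-line substitution followed by a two-case check that $1 - \ve - \delta \ge 0$. The only thing to be careful about is invoking the right parts of Lemma \ref{obs} — specifically, using (\ref{eq:dist}) to convert the distance into levels plus $\phi$ and $\delta$ terms, and using Lemma \ref{obs}(a) for the nonnegativity of $\phi$ — together with keeping straight the definitions of $\ve(T)$ and $\delta(u,v)$, which are complementary in exactly the way needed (when $\ve = 1$, $\delta \equiv 0$; when $\ve = 0$, $\delta \le 1$). I would present the computation as a short displayed chain of inequalities and then dispatch the two cases in a sentence each.
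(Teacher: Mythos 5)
Your proof is correct and follows essentially the same route as the paper: apply the radio labeling inequality to $f(u_{i+1})-f(u_i)$, substitute the distance formula \eqref{eq:dist}, and observe that $1-\ve-\delta(u_i,u_{i+1})\ge 0$. The only difference is that you spell out the two-case verification of $1-\ve-\delta\ge 0$, which the paper leaves implicit.
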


\begin{proof}
By Lemma \ref{obs} and the definition of a radio labeling, we have
$x_{i} \geq d + 1 - d(u_{i}, u_{i+1}) + L(u_{i+1}) + L(u_{i}) - (d + \ve)$ = $2 \phi(u_{i}, u_{i+1}) + (1-\ve - \delta(u_{i}, u_{i+1}))$ $\geq 2 \phi(u_{i}, u_{i+1})$.
\qed
\end{proof}

\begin{Lemma}
\label{lem3}
Let $T$ be a tree with order $p$ and diameter $d \ge 2$. Denote $\ve = \ve(T)$.
Let $f$ be an injective mapping from $V(T)$ to the set of nonnegative integers, and let $u_{0}, u_{1}, \ldots, u_{p-1}$ be the vertices of $T$ ordered in such a way that $0 = f(u_{0}) < f(u_{1}) < \cdots < f(u_{p-1})$. Then $f$ is a radio labeling of $T$ if and only if for any $0 \leq i < j \leq p-1$,
\be
\label{eq:sumxi}
\sum_{t = i}^{j - 1} x_{t} \geq
2 \sum_{t = i+1}^{j - 1} L(u_{t}) + 2  \phi(u_{i},u_{j}) - \delta(u_{i},u_{j}) - (j - i)(d+\ve) + (d+1);
\ee
that is,
\be
\label{eq:xi}
\sum_{t = i}^{j - 1} x_{t} \geq
\sum_{t = i}^{j-1} (L(u_t)+L(u_{t+1})) - d(u_{i}, u_{j}) - (j - i)(d+\ve) + (d+1).
\ee
\end{Lemma}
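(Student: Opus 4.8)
The plan is to establish the equivalence between being a radio labeling and the system of inequalities \eqref{eq:sumxi}/\eqref{eq:xi} by unwinding the definition of $x_t$ and comparing with the distance formula \eqref{eq:dist}. First I would observe that \eqref{eq:sumxi} and \eqref{eq:xi} are literally the same inequality rewritten: by Lemma \ref{obs}(d) we have $d(u_i,u_j) = L(u_i)+L(u_j) - 2\phi(u_i,u_j) + \delta(u_i,u_j)$, and since $\sum_{t=i}^{j-1}(L(u_t)+L(u_{t+1})) = L(u_i) + L(u_j) + 2\sum_{t=i+1}^{j-1} L(u_t)$, substituting turns the right-hand side of \eqref{eq:xi} into $2\sum_{t=i+1}^{j-1}L(u_t) + 2\phi(u_i,u_j) - \delta(u_i,u_j) - (j-i)(d+\ve) + (d+1)$, which is exactly the right-hand side of \eqref{eq:sumxi}. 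So it suffices to work with either form; I would keep \eqref{eq:xi} since the telescoping is cleanest there.

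Next I would compute the left-hand side. By definition $x_t = f(u_{t+1}) - f(u_t) + L(u_{t+1}) + L(u_t) - (d+\ve)$, so summing over $t = i, \ldots, j-1$ the $f$-terms telescope to $f(u_j) - f(u_i)$, the level terms give exactly $\sum_{t=i}^{j-1}(L(u_t)+L(u_{t+1}))$, and the constant contributes $-(j-i)(d+\ve)$. Hence
\[
\sum_{t=i}^{j-1} x_t = f(u_j) - f(u_i) + \sum_{t=i}^{j-1}(L(u_t)+L(u_{t+1})) - (j-i)(d+\ve).
\]
Plugging this into \eqref{eq:xi} and cancelling the common term $\sum_{t=i}^{j-1}(L(u_t)+L(u_{t+1})) - (j-i)(d+\ve)$ from both sides, we see that \eqref{eq:xi} is equivalent to $f(u_j) - f(u_i) \ge (d+1) - d(u_i,u_j)$, i.e. $f(u_j) - f(u_i) + d(u_i,u_j) \ge d+1$.

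Finally I would connect this with the radio condition. Since the $u_k$ are ordered so that $f(u_0) < f(u_1) < \cdots < f(u_{p-1})$, for $i < j$ we have $f(u_j) - f(u_i) = |f(u_i) - f(u_j)|$, so the inequality just derived is precisely the radio-labeling requirement $|f(u_i)-f(u_j)| + d(u_i,u_j) \ge \diam(T)+1$ for the pair $\{u_i, u_j\}$. Ranging over all $0 \le i < j \le p-1$ covers every pair of distinct vertices exactly once, and $f$ is injective by hypothesis, so $f$ is a radio labeling if and only if \eqref{eq:xi} holds for all such $i,j$; equivalently, if and only if \eqref{eq:sumxi} holds. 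This completes the proof. There is no real obstacle here — the lemma is a bookkeeping identity — the only thing to be careful about is the direction of the inequality when passing to $|f(u_i)-f(u_j)|$, which is why the ordering assumption on the $u_k$ is used, and keeping track that \eqref{eq:sumxi} and \eqref{eq:xi} are restatements of one another via \eqref{eq:dist}.
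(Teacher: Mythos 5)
Your proposal is correct and follows essentially the same route as the paper: telescoping $\sum_{t=i}^{j-1} x_t$ to isolate $f(u_j)-f(u_i)$, using the ordering to identify $f(u_j)-f(u_i)$ with $|f(u_i)-f(u_j)|$, and invoking the distance formula \eqref{eq:dist} to see that \eqref{eq:sumxi} and \eqref{eq:xi} are the same inequality. The only cosmetic difference is that you cancel terms against the \eqref{eq:xi} form directly, whereas the paper substitutes the distance formula into the \eqref{eq:sumxi} form; the content is identical.
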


\begin{proof}
We have
\bea
\sum_{t = i}^{j - 1} x_{t} & = & \sum_{t = i}^{j - 1} (f(u_{t+1}) - f(u_{t}) + L(u_{t+1}) + L(u_{t}) - (d+\ve)) \non \\
& = & f(u_{j}) - f(u_{i}) + 2\sum_{t = i+1}^{j - 1} L(u_{t}) + L(u_{i}) + L(u_{j}) - (j-i)(d+\ve). \label{eq:sum}
\eea
Thus, if $f$ is a radio labeling of $T$, then by (\ref{eq:dist}), for any $i, j$ with $0 \leq i < j \leq p-1$,
\bean
\sum_{t = i}^{j - 1} x_{t} & \ge & d + 1 - d(u_{i}, u_{j}) + 2 \sum_{t = i+1}^{j - 1} L(u_{t}) + L(u_{i}) + L(u_{j}) - (j-i)(d+\ve) \\
& = & 2 \sum_{t = i+1}^{j - 1} L(u_{t}) + 2 \phi(u_{i},u_{j}) - \delta(u_{i}, u_{j}) - (j - i)(d+\ve) + (d+1).
\eean

Conversely, if $f$ satisfies (\ref{eq:sumxi}), then by (\ref{eq:sum}) and (\ref{eq:dist}) and the definition of $x_t$,
\bean
f(u_{j}) - f(u_{i}) & = & \sum_{t = i}^{j-1} (f(u_{t+1}) - f(u_{t})) \\
& = & \sum_{t = i}^{j-1} (x_{t} - L(u_{t}) - L(u_{t+1}) + (d+\ve)) \\
& = & \sum_{t = i}^{j-1} x_{t} - 2 \sum_{t = i+1}^{j-1} L(u_t) - L(u_{i}) - L(u_{j}) + (j - i)(d+\ve)\\
& \geq & d + 1 - (L(u_{i}) + L(u_{j}) - 2 \phi(u_{i},u_{j}) + \delta(u_{i}, u_{j})) \\
& = & d + 1 - d(u_{i}, u_{j})
\eean
and hence $f$ is a radio labeling of $T$.
\qed
\end{proof}

\begin{proof}\textbf{of Theorem \ref{thm:ub}}~
\textsf{Necessity}:~Suppose that (\ref{eq:ub}) holds. Let $f$ be an optimal labeling of $T$ with the corresponding ordering of vertices given by $0 = f(u_{0}) < f(u_{1}) < f(u_{2}) < \cdots < f(u_{p-1})$. Then $\span(f) = \rn(T) = (p-1)(d+\ve) - 2 L(T) + \ve$. Thus from the proof of Lemma \ref{thm:lb} all inequalities there for $f$ must be equalities. More explicitly, we have $f(u_{i+1}) - f(u_{i}) = (d+1)-d(u_{i},u_{i+1})$ for $0 \leq i \leq p-2$, and (i) if $T$ has a unique weight centre, then $L(u_{0}) + L(u_{p-1}) =1$ and $\phi(u_{i}, u_{i+1}) = 0$ for $0 \le i \le p-2$, and (ii) if $T$ has two weight centres, then $L(u_{0}) = L(u_{p-1}) = 0$ (that is, $\{u_0, u_{p-1}\} = \{w, w'\}$) and $\phi(u_{i}, u_{i+1}) = 0$, $\d(u_{i}, u_{i+1}) = 1$ for $0 \le i \le p-2$. In the former case, we may assume without loss of generality that $L(u_{0}) = 0$ and $L(u_{p-1}) = 1$ (that is, $u_0 = w$ and $u_{p-1}$ is adjacent to $w$), because the mapping $\span(f) - f$ is also an optimal radio labeling of $T$. In either case, by (\ref{eq:dist}), we have $f(u_{i+1}) - f(u_{i}) = (d+\ve)-L(u_{i+1})-L(u_{i})$, that is, $x_i = 0$, for $0 \leq i \leq p-2$. Since $f$ is a radio labeling, it satisfies (\ref{eq:xi}). So the right-hand side of (\ref{eq:xi}) must be non-positive and (\ref{eq:dij}) follows.

\textsf{Sufficiency}:~Suppose that a linear order $u_0, u_1, \ldots, u_{p-1}$ of the vertices of $T$ satisfies (\ref{eq:dij}), and $f$ is defined by (\ref{eq:f0}) and (\ref{eq:f}). By Lemma \ref{thm:lb} it suffices to prove that $f$ is a radio labeling of $T$ and $\span(f) = (p-1)(d+\ve) - 2 L(T) + \ve$.

In fact, since $T$ has diameter $d$, we have $L(u_{i}) + L(u_{i+1}) < d + \ve$ for $0 \leq i \leq p-2$. Thus, by (\ref{eq:f0}) and (\ref{eq:f}), we have $0 = f(u_{0}) < f(u_{1}) < \cdots < f(u_{p-1})$. By (\ref{eq:f}), we have $x_i = 0$ for $0 \le i \le p-2$. Thus, for $0 \le i < j \le p-1$, the left-hand side of (\ref{eq:xi}) is equal to $0$. On the other hand, by (\ref{eq:dij}), the right-hand side of the same equation is non-positive. Therefore, $f$ satisfies (\ref{eq:xi}) and so is a radio labeling of $T$. The span of $f$ is given by
\bean
\span(f) & = & f(u_{p-1}) - f(u_{0}) \\
& = & \sum_{i=0}^{p-2} (f(u_{i+1}) - f(u_{i})) \\
& = & (p-1)(d+\ve) - \sum_{i=0}^{p-2} (L(u_{i+1}) + L(u_{i})) \\
& = & (p-1)(d+\ve) - 2 L(T) + L(u_0) + L(u_{p-1}) \\
& = & (p-1)(d+\ve) - 2 L(T) + \ve.
\eean
Therefore, $\rn(T) \le (p-1)(d+\ve) - 2 L(T) + \ve$. This together with (\ref{eq:lb}) implies (\ref{eq:ub}) and that $f$ is an optimal radio labeling of $T$.
\qed
\end{proof}

\begin{Remark}
{\em In general, it seems difficult to decide whether a general tree satisfies the conditions in Theorem \ref{thm:ub}, and if it satisfies these conditions how we can find the linear order meeting (a)-(b) in Theorem \ref{thm:ub}. Nevertheless, these may be achieved  for some special families of trees such as the ones in the next section.
	
Consider the following properties:
	
($A_i$) $u_{i}$ and $u_{i+1}$ are in different branches when $W(T) = \{w\}$ and in opposite branches when $W(T) = \{w, w'\}$;

($B_i$) $L(u_{i}) \leq (d+1)/2$ when $W(T) = \{w\}$, and $L(u_{i}) \leq (d-1)/2$ when $W(T) = \{w, w'\}$;

($C_{ij}$) $\phi(u_{i},u_{j}) \leq (j-i-1)((d+\ve)/2) - \sum_{t=i+1}^{j-1}L(u_{t})-((1-\ve)/2)$ if $u_{i}$ and $u_{j}$ are in the same branch.

It can be verified that any linear order $u_{0}, u_{1}, \ldots, u_{p-1}$ that satisfies (a)-(b) in Theorem \ref{thm:ub} also satisfies ($A_i$) for $0 \le i \le p-2$, ($B_i$) for $0 \leq i \leq p-1$ and ($C_{ij}$) for $1 \leq i < j \leq p-1$. In fact, one can show ($A_i$) by taking $j = i+1$ in \eqref{eq:dij} and using Lemma \ref{obs}. Clearly, ($B_0$) and ($B_{p-1}$) hold. For $1 \le i \le p-2$, by applying (\ref{eq:dij}) to $d(u_{i-1}, u_{i+1})$ and noting $\phi(u_{i-1},u_{i+1}) \geq 0$, we obtain $L(u_i) \le L(u_i) + \phi(u_{i-1},u_{i+1}) \leq (d+1)/2$ if $W(T) = \{w\}$ and $L(u_i) \le L(u_i) + \phi(u_{i-1},u_{i+1}) \leq (d-1)/2$ if $W(T) = \{w, w'\}$, and hence ($B_i$) holds. Using \eqref{eq:dist} and \eqref{eq:dij}, one can show that ($C_{ij}$) holds for $1 \leq i < j \leq p-1$.

Inspired by the properties above, one may try to test whether the vertices of a given tree $T$ can be ordered in such a way that (a) and (b) in Theorem \ref{thm:ub} are satisfied, and to produce such a linear order if it exists, by using the following procedure:

(i) Set $u_0 = w$.

(ii) Choose $u_{p-1} \in N(w)$ if $W(T) = \{w\}$ and $u_{p-1} = w'$ if $W(T) = \{w, w'\}$.

(iii) Choose a vertex $u_1$ (in any branch) other than $u_0$ and $u_{p-1}$ such that property ($B_1$) is respected.

(iv) In general, suppose that $u_0, u_1, \ldots, u_{t}$ have been put in order such that ($A_i$) holds for $0 \le i \le t-1$, ($B_i$) holds for $0 \leq i \leq t$, and ($C_{ij}$) holds for $1 \leq i < j \leq t$. If $t = p-2$, stop and output the linear order $u_0, u_1, \ldots, u_{p-1}$. If $t < p-2$, choose $u_{t+1}$ from $V(T) \setminus \{u_0, u_1, \ldots, u_{t}, u_{p-1}\}$ such that ($A_t$), ($B_{t+1}$) and ($C_{i, t+1}$), $1 \leq i < t+1$ are respected, and continue the process with the longer sequence $u_0, u_1, \ldots, u_{t}, u_{t+1}$, if such a vertex $u_{t+1}$ exists. (It can be verified that ($C_{i, t+1}$) holds if $L(u_{t+1})+L(u_{i}) < \sum_{k=i+1}^{t}(d+\ve-2L(u_{k}))+\ve$. So it suffices to ensure that ($C_{i, t+1}$) is respected for those $i$ such that $L(u_{t+1})+L(u_{i}) \ge \sum_{k=i+1}^{t}(d+\ve-2L(u_{k}))+\ve$.) If no such a vertex $u_{t+1}$ exists, one may try to choose a different $u_i$ for some $i \le t$ and run the procedure for the sequence $u_0, u_1, \ldots, u_{i}$.

It can be proved that, if we terminate with $t=p-2$, then the linear order $u_0, u_1, \ldots, u_{p-1}$ produced above satisfies (a) and (b) in Theorem \ref{thm:ub} and therefore the radio number of $T$ is given by \eqref{eq:ub}.

Obviously, the procedure above is not an algorithm, but it can be easily modified to give an enumerative algorithm by considering all possible choices for $u_{t+1}$ in each iteration. Though this algorithm is likely to be exponential in general (as it requires enumeration of a large number of possibilities), it may be efficient for some families of trees with special structures or small orders.
}
\end{Remark}

We now present two sets of sufficient conditions for \eqref{eq:ub} to hold. These conditions are easier to verify than \eqref{eq:dij} in some cases and will be used in the next section.

\begin{Theorem}
\label{thm:cor1}
Let $T$ be a tree with order $p$ and diameter $d \ge 2$. Denote $\ve = \ve(T)$. Suppose that there exists a linear order $u_0, u_1, \ldots, u_{p-1}$ of the vertices of $T$ such that
\begin{enumerate}[\rm (a)]
\item $u_0 = w$ and $u_{p-1} \in N(w)$ when $W(T) = \{w\}$, and $\{u_0, u_{p-1}\} = \{w, w'\}$ when $W(T) = \{w, w'\}$;
\item $u_{i}$ and $u_{i+1}$ are in different branches if $W(T) = \{w\}$ and in opposite branches if $W(T) = \{w, w'\}$, $0 \le i \le p-2$;
\end{enumerate}
and one of the following holds:
\begin{enumerate}[\rm (c)]
\item  $\min\{d(u_{i},u_{i+1}),d(u_{i+1},u_{i+2})\} \leq (d+1-\ve)/2,\;  0 \leq i \leq p-3$;
\item[\rm (d)]  $d(u_{i},u_{i+1}) \leq (d+1+\ve)/2$, $0 \leq i \leq p-2$.
\end{enumerate}
Then $\rn(T)$ is given by \eqref{eq:ub} and $f$ defined in \eqref{eq:f0}-\eqref{eq:f} is an optimal radio labeling of $T$.
\end{Theorem}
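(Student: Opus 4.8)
The plan is to reduce everything to Theorem \ref{thm:ub}: it suffices to prove that a linear order $u_0,u_1,\ldots,u_{p-1}$ satisfying (a), (b) and either (c) or (d) automatically satisfies \eqref{eq:dij} for all $0\le i<j\le p-1$, since then \eqref{eq:ub} and the optimality of the labeling $f$ in \eqref{eq:f0}--\eqref{eq:f} follow directly from Theorem \ref{thm:ub}. The first move is to rewrite \eqref{eq:dij} in a usable form. By (b), each consecutive pair $u_t,u_{t+1}$ lies in different branches when $W(T)=\{w\}$ and in opposite branches when $W(T)=\{w,w'\}$, so Lemma \ref{obs}(b),(c) give $\phi(u_t,u_{t+1})=0$ and $\delta(u_t,u_{t+1})=1-\ve$, whence $d(u_t,u_{t+1})=L(u_t)+L(u_{t+1})+(1-\ve)$ by \eqref{eq:dist}. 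Substituting this into \eqref{eq:dij} and telescoping shows \eqref{eq:dij} is equivalent to
\[
\sum_{t=i}^{j-1}d(u_t,u_{t+1})\ \le\ d(u_i,u_j)+(j-i-1)(d+1),
\]
and applying \eqref{eq:dist} once more to $d(u_i,u_j)$ shows it is in turn equivalent to
\[
2\sum_{t=i+1}^{j-1}L(u_t)+2\phi(u_i,u_j)-\delta(u_i,u_j)\ \le\ (j-i-1)(d+\ve)-(1-\ve);
\]
call this last inequality $(\star)$. For $j=i+1$ both sides of $(\star)$ equal $-(1-\ve)$, so $(\star)$ holds with equality; hence we may assume $j\ge i+2$ and argue by cases according to whether $\phi(u_i,u_j)=0$ or $\phi(u_i,u_j)\ge1$, i.e.\ (by Lemma \ref{obs}(b)) whether $u_i,u_j$ are in different/opposite branches or in the same branch.

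\emph{Case A: $u_i$ and $u_j$ lie in different or opposite branches.} Then $\phi(u_i,u_j)=0$ and $\delta(u_i,u_j)=1-\ve$, so $(\star)$ collapses to $\sum_{t=i+1}^{j-1}L(u_t)\le(j-i-1)(d+\ve)/2$, and it suffices to prove $L(u_t)\le(d+\ve)/2$ for $i+1\le t\le j-1$. Under (c) this is immediate, since $L(u_t)\le\min\{d(u_{t-1},u_t),d(u_t,u_{t+1})\}-(1-\ve)\le(d-1+\ve)/2$. Under (d) I invoke (a): the level-$0$ vertices of $T$ are exactly $W(T)$, and (a) places them only at positions $0$ and $p-1$, so every $u_t$ with $1\le t\le p-2$ has an order-neighbour $u_s$ ($s=t-1$ or $t+1$) with $L(u_s)\ge1$; then $L(u_t)=d(u_t,u_s)-L(u_s)-(1-\ve)\le(d+1+\ve)/2-1-(1-\ve)\le(d+\ve)/2$.

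\emph{Case B: $u_i$ and $u_j$ lie in the same branch.} Now $\delta(u_i,u_j)=0$ and $\phi:=\phi(u_i,u_j)\ge1$, so $(\star)$ reads $2\sum_{t=i+1}^{j-1}L(u_t)+2\phi\le(j-i-1)(d+\ve)-(1-\ve)$. The decisive extra input is that if $a$ is the deepest common ancestor of $u_i$ and $u_j$, then $L(a)=\phi$, $L(u_i)\ge\phi$, $L(u_j)\ge\phi$, and moreover $L(u_i)+L(u_j)\ge2\phi+1$ because $u_i\ne u_j$ cannot both coincide with $a$. Under (d), both $d(u_i,u_{i+1})$ and $d(u_{j-1},u_j)$ are at most $(d+1+\ve)/2$; writing these as $L(u_i)+L(u_{i+1})+(1-\ve)$ and $L(u_{j-1})+L(u_j)+(1-\ve)$, adding them and substituting $L(u_i)+L(u_j)\ge2\phi+1$ gives a sharp bound on $L(u_{i+1})+L(u_{j-1})$; combining this with $L(u_t)\le(d+\ve)/2$ for $i+2\le t\le j-2$ (from Case A) and using $\phi\ge1$, $\ve\le1$ yields $(\star)$, the sub-case $j=i+2$ (where $u_{i+1}=u_{j-1}$) being the same computation with a single interior vertex. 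Under (c) only $\min\{L(u_i)+L(u_{i+1}),\,L(u_{i+1})+L(u_{i+2})\}\le(d-1+\ve)/2$ is available, so I split on which of the two is small: if $L(u_i)+L(u_{i+1})\le(d-1+\ve)/2$ then $L(u_{i+1})\le(d-1+\ve)/2-\phi$ via $L(u_i)\ge\phi$; otherwise $L(u_{i+1})+L(u_{i+2})\le(d-1+\ve)/2$ is used as a single pair bound (and when $j=i+2$ this reads $L(u_{i+1})+L(u_j)\le(d-1+\ve)/2$, again giving $L(u_{i+1})\le(d-1+\ve)/2-\phi$ via $L(u_j)\ge\phi$). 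In each branch, combining with $L(u_t)\le(d-1+\ve)/2$ for the remaining interior $t$ and with $\phi\le(d-1+\ve)/2$ delivers $(\star)$.

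I expect Case B with small $j-i$ to be the real obstacle, most acutely $j-i\in\{2,3\}$ with $W(T)=\{w\}$ and $d$ even: there the crude estimate $L(u_t)\le(d+\ve)/2$ on the interior vertices falls short of what $(\star)$ demands by exactly $1$, so the whole argument hinges on the refinement $L(u_i)+L(u_j)\ge2\phi+1$ (under (d)) and on the ``which edge is short'' dichotomy (under (c)). Everything else is routine bookkeeping: checking that each appeal to (c) or (d) is made at an index in its permitted range, verifying $L(u_t)\le(d+\ve)/2$ uniformly for interior $t$, and confirming that the boundary situations in which $u_i$ or $u_j$ equals $w$, $w'$, or a neighbour of $w$ are compatible with the branch dichotomy (in particular, in Case B neither $u_i$ nor $u_j$ can be a weight centre, since a weight centre lies in no branch).
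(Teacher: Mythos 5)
Your proposal is correct and reaches the theorem by the same top-level reduction as the paper --- both verify condition \eqref{eq:dij} and then invoke Theorem \ref{thm:ub} --- but the internal case analysis is organized quite differently. The paper works directly with $S_{i,j}$, the right-hand side of \eqref{eq:dij}, splitting on the number of weight centres, on (c) versus (d), and on $j-i\in\{2,3,\ge 4\}$, and within each case bounds $S_{i,j}$ numerically according to which consecutive gaps are short, occasionally using lower bounds on $d(u_i,u_j)$ coming from the branch structure. You instead rewrite \eqref{eq:dij} as the equivalent inequality $(\star)$ in $\sum_{t=i+1}^{j-1}L(u_t)$, $\phi(u_i,u_j)$ and $\delta(u_i,u_j)$, and split on whether $u_i$ and $u_j$ share a branch; the same-branch case is then closed by the structural refinement $L(u_i)+L(u_j)\ge 2\phi(u_i,u_j)+1$, which supplies exactly the extra unit that the crude interior bound $L(u_t)\le (d+\ve)/2$ misses when $j-i\in\{2,3\}$ --- you correctly identify this as the crux. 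Your organization isolates the tight sub-cases more transparently and makes the role of $\phi$ explicit; the paper's is more mechanical but never needs the common-ancestor refinement. I checked your estimates in both cases A and B under (c) and (d), including the boundary indices, and they close. One small slip to repair: in your Case A, when $W(T)=\{w,w'\}$ and $u_i,u_j$ (with $j-i$ even) lie in \emph{different} branches attached to the \emph{same} weight centre, one has $\delta(u_i,u_j)=0$ rather than $1-\ve=1$, so the target strengthens to $2\sum_{t=i+1}^{j-1}L(u_t)\le (j-i-1)d-1$; this still holds because your interior bounds in that situation are $L(u_t)\le (d-1)/2$ under (c) and $L(u_t)\le (d-3)/2$ under (d), both strictly below $d/2$, but the sub-case must be stated and handled separately rather than absorbed into the claim $\delta(u_i,u_j)=1-\ve$.
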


\begin{proof}
It suffices to prove that the linear order $u_0, u_1, \ldots, u_{p-1}$ satisfies (\ref{eq:dij}). Denote by $S_{i,j}$ the right-hand side of (\ref{eq:dij}) with respect to this order. In view of \eqref{eq:dist}, we may assume $j-i \geq 2$.

\textsf{Case 1:} $W(T) = \{w\}$.

Suppose (a), (b) and (c) hold. If $j \geq i+4$, then $\min \{L(u_{t})+L(u_{t+1}): i \le t \le j-1\} \leq d/2$ and $\max  \{L(u_{t})+L(u_{t+1}): i \le t \le j-1\} = d$ as $\min\{d(u_{i},u_{i+1}), d(u_{i+1},u_{i+2})\} \leq d/2$. Hence $S_{i,j} \leq \((j-i)/2\)\(d/2+d\)-3(d+1) \leq 2\(3d/2\)-3d-3 = -3 < d(u_{i},u_{j})$. If $j = i+3$, then either (i) $d(u_{i},u_{i+1}) \leq d/2$, $d(u_{i+1},u_{i+2}) > d/2$ and $d(u_{i+2},u_{i+3}) \leq d/2$, or (ii) $d(u_{i},u_{i+1}) > d/2$, $d(u_{i+1},u_{i+2}) \leq d/2$ and $d(u_{i+2},u_{i+3}) > d/2$. In case (i), we have $L(u_{i})+L(u_{i+1}) \leq d/2$, $d/2 < L(u_{i+1})+L(u_{i+2}) \leq d$ and $L(u_{i+2})+L(u_{i+3}) \leq d/2$. Hence $S_{i,j} \leq \(d/2+d+d/2\)-2(d+1) = -2 < d(u_{i},u_{j})$. In case (ii), we have $d/2 < L(u_{i})+L(u_{i+1}) < d$, $L(u_{i+1})+L(u_{i+2}) \leq d/2$, $d/2 < L(u_{i+2})+L(u_{i+3}) \leq d$, and $d(u_{i},u_{i+3}) \geq d/2$ as $u_{i}$ and $u_{i+3}$ are in different branches. Hence $S_{i,j} \leq \(d+d/2+d\)-2(d+1) = d/2-2 < d(u_{i},u_{j})$. If $j = i+2$, then either (i) $d(u_{i},u_{i+1}) \leq d/2$ and $d(u_{i+1},u_{i+2}) \leq d/2$,  or (ii) $d(u_{i},u_{i+1}) \leq d/2$ and $d/2 < d(u_{i+1},u_{i+2}) \leq d$. In case (i), we have $L(u_{i})+L(u_{i+1}) \leq d/2$ and $L(u_{i+1})+L(u_{i+2}) \leq d/2$, and hence $S_{i,j} \leq \(d/2+d/2\)-(d+1) = -1 < d(u_{i},u_{j})$. In case (ii), we have $L(u_{i})+L(u_{i+1}) \leq d/2$ and $L(u_{i+1})+L(u_{i+2}) = d(u_{i+1},u_{i+2})$, and hence $S_{i,j} \leq \(d/2+d(u_{i+1},u_{i+2})\)-(d+1) = d(u_{i+1},u_{i+2})-\(d/2-1\) < d(u_{i},u_{j})$.

Suppose (a), (b) and (d) hold. Then, for $0 \leq i \leq p-2$, $d(u_{i},u_{i+1}) \leq (d+1+\ve)/2$ and $L(u_{i})+L(u_{i+2}) \leq (d+2)/2$ as $u_{i}$ and $u_{i+1}$ are in different branches. Hence $S_{i,j} \leq \sum_{t=i}^{j-1}\left((d+2)/2\right)-(j-i)(d+1)+(d+1) = (d+1)-(j-i)\left(d/2\right) \leq 1 \leq d(u_{i},u_{i+1})$ as $j-i \geq 2$.

\textsf{Case 2:} $W(T) = \{w, w'\}$.

Suppose (a), (b) and (c) hold. If $j \geq i+4$, then $\min\{L(u_{t})+L(u_{t+1}): i \le t \le j-1\} \leq (d-1)/2$ and $\max\{L(u_{t})+L(u_{t+1}): i \le t \le j-1 \} = d-1$ as $\min\{d(u_{i},u_{i+1}),d(u_{i+1},u_{i+2})\} \leq (d+1/2)$. Hence $S_{i,j} \leq \((j-i)/2\)\((d-1)/2+d-1\)-4d+(d+1) \leq 2\(3(d-1)/2\)-3d+1 = -2 < d(u_{i},u_{j})$. If $j = i+3$, then either (i) $d(u_{i},u_{i+1}) \leq (d+1)/2$, $d/2 < d(u_{i+1},u_{i+2}) \leq d$ and $d(u_{i+2},u_{i+3}) \leq (d+1)/2$, or (ii) $(d+1)/2 < d(u_{i},u_{i+1}) \leq d$, $d(u_{i+1},u_{i+2}) \leq d/2$ and $(d+1)/2 < d(u_{i+2},u_{i+3}) \leq d$. In case (i), $L(u_{i})+L(u_{i+1}) \leq (d-1)/2$, $(d-1)/2 < L(u_{i+1})+L(u_{i+2}) \leq d-1$ and $L(u_{i+2})+L(u_{i+3}) \leq (d-1)/2$. Hence $S_{i,j} \leq \((d-1)/2+d-1+(d-1)/2\)-3d+(d+1) \leq \(2d-2\)-2d+1 = -1 < d(u_{i},u_{j})$. In case (ii), $(d+1)/2 < d(u_{i},u_{i+1}) \leq d$, $d(u_{i+1},u_{i+2}) \leq (d+1)/2$ and $(d+1)/2 < d(u_{i+2},u_{i+3}) \leq d$. Hence $S_{i,j} \leq \(d-1+(d-1)/2+d-1\)-3d+(d+1) \leq \(5(d-1)/2\)-2d+1 = (d-3/2) < d(u_{i},u_{j})$ as $u_{i}$ and $u_{j}$ are in opposite branches. If $j = i+2$, then either (i) $d(u_{i},u_{i+1}) \leq (d+1)/2$ and $d(u_{i+1},u_{i+2}) \leq (d+1)/2$, or (ii) $d(u_{i},u_{i+1}) \leq (d+1)/2$ and $d(u_{i+1},u_{i+2}) > (d+1)/2$. In the former case, we have $L(u_{i})+L(u_{i+1}) \leq (d-1)/2$ and $L(u_{i+1})+L(u_{i+2}) \leq (d-1)/2$, and hence $S_{i,j} \leq \((d-1)/2+(d-1)/2\)-2d+(d+1) \leq \(d-1\)-d+1 = 0 < d(u_{i},u_{j})$. In the latter case, we have $L(u_{i})+L(u_{i+1}) \leq (d-1)/2$ and $(d-1)/2 < L(u_{i+1})+L(u_{i+2}) \leq d-1$, and hence $S_{i,j} \leq \((d-1)/2 + d(u_{i+1},u_{i+2}) - 1\)-2d+(d+1) \leq d(u_{i+1},u_{i+2}) - \((d+1)/2\) < d(u_{i},u_{j})$.

Suppose (a), (b) and (d) hold. Then, for $0 \leq i \leq p-2$, $d(u_{i},u_{i+1}) \leq (d+1+\ve)/2$ and $L(u_{i})+L(u_{i+1}) \leq (d-1)/2$ as $u_{i}$ and $u_{i+1}$ are in opposite branches. Hence $S_{i,j} \leq \sum_{t=i}^{j-1}\left((d-1)/2\right)-(j-i) d + (d+1) = (d+1)-(j-i)\left((d+1)/2\right) \leq 0 < d(u_{i},u_{i+1})$ as $j-i \geq 2$.
\qed
\end{proof}

The proof of Theorems \ref{thm:ub} and \ref{thm:cor1} implies the following result which will be used in the next section.

\begin{Theorem}
\label{thm:ub1}
Let $T$ be a tree with order $p$ and diameter $d \ge 2$. Denote $\ve = \ve(T)$. Then, for any linear order $u_0, u_1, \ldots, u_{p-1}$ of the vertices of $T$ satisfying (\ref{eq:dij}), or (b) and one of (c) and (d) in Theorem \ref{thm:cor1}, the mapping $f$ given by (\ref{eq:f0}) and (\ref{eq:f}) is a radio labeling of $T$. Moreover, if in addition $L(u_0) + L(u_{p-1}) = k + 1$ when $W(T) = \{w\}$ and $L(u_0) + L(u_{p-1}) = k$ when $W(T) = \{w, w'\}$, then
\be
\label{eq:ub1}
\rn(T) \le (p-1)(d+\ve) - 2 L(T) + \ve + k
\ee
and \span(f) is equal to this upper bound.
\end{Theorem}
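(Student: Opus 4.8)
The plan is to reuse, essentially verbatim, the sufficiency arguments already carried out in the proofs of Theorems \ref{thm:ub} and \ref{thm:cor1}, and to observe that condition (a) of those theorems is invoked \emph{only} in the final span computation, namely to conclude $L(u_0)+L(u_{p-1})=\ve$. Once (a) is dropped and $L(u_0)+L(u_{p-1})$ is instead prescribed, the span of $f$, and hence the resulting upper bound on $\rn(T)$, shifts by exactly the stated amount $k$.

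First I would check that $f$ defined by (\ref{eq:f0})--(\ref{eq:f}) is a radio labeling of $T$. If the order satisfies (b) together with one of (c), (d), then the distance estimates in the proof of Theorem \ref{thm:cor1} already establish (\ref{eq:dij}), so it is enough to treat the case that (\ref{eq:dij}) holds. Taking $j=i+1$ in (\ref{eq:dij}) and applying Lemma \ref{obs} (this is precisely property ($A_i$) of the Remark following Theorem \ref{thm:ub}) forces $u_i$ and $u_{i+1}$ to lie in different branches when $W(T)=\{w\}$ and in opposite branches when $W(T)=\{w,w'\}$; hence $\phi(u_i,u_{i+1})=0$ and, by (\ref{eq:dist}), $L(u_i)+L(u_{i+1}) = d(u_i,u_{i+1}) - \delta(u_i,u_{i+1}) < d+\ve$ in either case. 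Consequently (\ref{eq:f0})--(\ref{eq:f}) produce a strictly increasing sequence $0=f(u_0)<f(u_1)<\cdots<f(u_{p-1})$ and give $x_i=0$ for $0\le i\le p-2$, so for every $0\le i<j\le p-1$ the left-hand side of (\ref{eq:xi}) equals $0$ while its right-hand side is non-positive by (\ref{eq:dij}); Lemma \ref{lem3} then yields that $f$ is a radio labeling. Crucially, none of these steps uses condition (a).

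Finally I would compute the span. Exactly as at the end of the sufficiency part of the proof of Theorem \ref{thm:ub},
$$
\span(f) = \sum_{i=0}^{p-2}(f(u_{i+1})-f(u_i)) = (p-1)(d+\ve) - 2L(T) + L(u_0)+L(u_{p-1}).
$$
Substituting the hypothesis on $L(u_0)+L(u_{p-1})$ --- equal to $k+1$ when $W(T)=\{w\}$ (so $\ve=1$) and equal to $k$ when $W(T)=\{w,w'\}$ (so $\ve=0$) --- gives $\span(f) = (p-1)(d+\ve) - 2L(T) + \ve + k$ in both cases. Since $f$ is a radio labeling, this yields (\ref{eq:ub1}) and shows that equality in (\ref{eq:ub1}) is attained by $f$. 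I do not expect a genuine difficulty here; the one point that has to be checked with care is that the monotonicity inequalities $L(u_i)+L(u_{i+1})<d+\ve$, which guarantee that $f$ is strictly increasing and hence injective, continue to hold without assuming (a) --- and these are furnished either by property ($A_i$) (when (\ref{eq:dij}) is assumed) or directly by hypothesis (b) (when (b) and one of (c), (d) are assumed). Everything downstream of that --- the vanishing of the $x_i$, the application of Lemma \ref{lem3}, and the span formula --- is purely formal and does not see condition (a) at all.
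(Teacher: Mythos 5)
Your proposal is correct and follows essentially the same route as the paper, whose proof of Theorem \ref{thm:ub1} is precisely the observation you make explicit: the sufficiency arguments of Theorems \ref{thm:ub} and \ref{thm:cor1} establish that $f$ is a radio labeling without ever invoking condition (a), which enters only through the term $L(u_0)+L(u_{p-1})$ in the span computation. Your extra care in deriving the monotonicity inequality $L(u_i)+L(u_{i+1})<d+\ve$ from property ($A_i$) (or from (b)) rather than from (a) is exactly the point that makes the paper's one-line justification legitimate.
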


\section{Radio number for three families of trees}
\label{sec:three}

In this section we use Theorems \ref{thm:ub}, \ref{thm:cor1} and \ref{thm:ub1} to determine the radio number for three families of trees. We continue to use the terminology and notation in the previous section.

\subsection{Banana trees}

A $k$-star is a tree consisting of $k$ leaves and another vertex joined to all leaves by edges.
We define the \emph{$(n,k)$-banana tree}, denoted by $B(n,k)$, to be the tree obtained by joining one leaf of each of $n$ copies of a $(k-1)$-star to a single root (which is distinct from all vertices in the $k$-stars). See Fig. \ref{fig1} for an illustration. It is clear that $B(n,k)$ has diameter 6 and exactly one weight centre if $n \ge 2$.

\begin{Theorem}
\label{thm:banana}
Let $n \geq 5$ and $k \geq 4$ be integers. Then
$$
\rn(B(n,k)) = n(k+6)+1.
$$
\end{Theorem}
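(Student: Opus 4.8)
The plan is to compute the relevant parameters of $B(n,k)$ exactly and then exhibit a linear order of the vertices satisfying the hypotheses of Theorem \ref{thm:cor1} (or Theorem \ref{thm:ub}), so that $\rn(B(n,k))$ equals the lower bound \eqref{eq:lb}. First I would record the structure: $B(n,k)$ has $p = n k + 1$ vertices (the root $w$, the $n$ vertices at level $1$ joined to $w$, the $n$ centres of the $(k-1)$-stars at level $2$, and the $n(k-2)$ remaining leaves at level $3$). Since $n \ge 2$, $w$ is the unique weight centre, so $\ve = 1$, and $d = 6$. The total level is $L(T) = 0 + n\cdot 1 + n\cdot 2 + n(k-2)\cdot 3 = 3nk - 3n = 3n(k-1)$. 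Plugging into \eqref{eq:lb}: $(p-1)(d+\ve) - 2L(T) + \ve = (nk)(7) - 6n(k-1) + 1 = 7nk - 6nk + 6n + 1 = nk + 6n + 1 = n(k+6)+1$. So the target formula is exactly the lower bound, and it remains to prove it is achieved.

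To show the bound is achieved I would apply Theorem \ref{thm:cor1} with condition (d): since $\ve = 1$, condition (d) reads $d(u_i,u_{i+1}) \le (d+1+\ve)/2 = 4$. So I need a linear order $u_0, u_1, \ldots, u_{p-1}$ with $u_0 = w$, $u_{p-1} \in N(w)$, such that consecutive vertices lie in different branches (there are $n$ branches, one per $(k-1)$-star) and consecutive distances are at most $4$. Note that two vertices in different branches at levels $L(u_i), L(u_{i+1})$ are at distance $L(u_i)+L(u_{i+1})$ (by \eqref{eq:dist} with $\phi = \delta = 0$), so $d(u_i,u_{i+1}) \le 4$ holds automatically unless both $u_i$ and $u_{i+1}$ are leaves at level $3$ (distance $6$) or one is at level $3$ and the other at level $2$ (distance $5$) — equivalently, I must avoid placing a level-$3$ vertex next to another level-$2$-or-$3$ vertex. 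The natural remedy is to interleave: put the level-$1$ vertices and the level-$2$ centres between the level-$3$ leaves so that every level-$3$ leaf is flanked by level-$\le 1$ vertices. Concretely, I would group vertices so that each level-$3$ leaf is immediately preceded or followed by a level-$1$ vertex or the root, cycling through the $n$ branches in a round-robin fashion so that the "different branch" requirement is also met. Since there are $n(k-2)$ leaves at level $3$ and $n$ vertices at level $1$ plus $n$ centres at level $2$ plus the root, with $k \ge 4$ there are roughly $n(k-2)$ level-$3$ leaves and about $2n+1$ low-level vertices, so a leaf cannot always be adjacent to a low vertex on both sides; the condition only requires distance $\le 4$ for \emph{each} consecutive pair, i.e. no two consecutive entries both "high". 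Hence I actually want an arrangement where level-$3$ leaves are never consecutive — pair each level-$3$ leaf with a companion of level $\le 1$. This needs at least as many low-level slots as high-level slots, which fails when $k$ is large, so condition (d) alone will not suffice and I will instead use condition (c): $\min\{d(u_i,u_{i+1}), d(u_{i+1},u_{i+2})\} \le (d+1-\ve)/2 = 3$, meaning among any three consecutive vertices two consecutive ones are at distance $\le 3$, i.e. at least one of each pair of neighbours of $u_{i+1}$ involves a vertex of level $\le 1$. Arranging the $2n$ vertices of level $\le 2$ interspersed among the $n(k-2)$ level-$3$ leaves so that between any two "high" vertices there is at least one "low" vertex — wait, that is the same obstruction. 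Let me reconsider: condition (c) with bound $3$ says that for every internal index $i+1$, at least one of its two incident edges has length $\le 3$; a level-$2$–to–level-$1$ edge has length $3$, a level-$2$–to–level-$2$ edge has length $4$, a level-$3$–to–anything-but-root-or-level-$1$ edge has length $\ge 5$. So it suffices that the subsequence of "bad positions" (level $\ge 2$) has no three in a row, and moreover whenever $u_{i+1}$ has level $\ge 2$ one of its neighbours has level $\le 1$. With $2n+1$ vertices of level $\le 1$ (root plus $n$ level-$1$ vertices — actually level $\le 1$ means root and the $n$ level-$1$ vertices, so $n+1$ of them) this is still tight for large $k$; I will need to be more careful and may have to fall back on verifying \eqref{eq:dij} directly for the chosen order.

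Here is the order I would actually try. List the $n$ branches as $1, \ldots, n$. In branch $i$ denote the level-$1$ vertex $a_i$, the level-$2$ centre $b_i$, and the level-$3$ leaves $c_{i,1}, \ldots, c_{i,k-2}$. Take $u_0 = w$, then proceed in blocks: for each leaf, alternate its branch index so consecutive leaves are in different branches, and periodically insert an $a_i$ or $b_i$. A clean choice: arrange all $n(k-2)$ leaves in an order that is a "round robin" over branches (branch $1, 2, \ldots, n, 1, 2, \ldots$), which automatically makes consecutive leaves lie in different branches and at mutual distance $6$; then this violates (c) and (d), so instead I interleave the $n$ centres $b_i$ and the $n$ vertices $a_i$ into this sequence, placing $b_i$ right before the block of branch-$i$ leaves and $a_i$ right after, and finally set $u_{p-1} = a_1$ (adjacent to $w$). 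I would then check: (b) different branches — needs the branch index to change at every step, which I arrange by never putting two same-branch items adjacent except the forced $b_i, c_{i,1}$ pair, and I would permute to avoid that; and (c)/(d) — the only long edges are between two level-$3$ leaves, which I eliminate by ensuring the round-robin never repeats a "high–high" pattern, using the $a_i$'s and $b_i$'s and $w$ as separators. Since there are $2n+1 \ge$ enough separators only when $k-2 \le 2n+1$... For general $k$ this fails, so the honest approach is: I expect one needs to directly verify inequality \eqref{eq:dij} for a cleverly chosen order, exploiting that $\phi(u_i,u_j) \ge 1$ (saving $2$ on the right side of \eqref{eq:dij}) whenever $u_i, u_j$ are in the same branch — which, by a counting/pigeonhole argument over the $n$ branches, happens frequently enough to compensate. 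This is the main obstacle: showing that the "same-branch savings" in $\phi$ offset the excess from consecutive level-$3$ leaves in \eqref{eq:dij}, uniformly for all $n \ge 5$, $k \ge 4$. I would handle it by: (1) fixing an explicit round-robin-by-branch order of all the leaves with the $a_i, b_i$ and $w$ inserted at regular spots; (2) for a pair $(i,j)$ with $j - i$ small, bounding $S_{i,j}$ crudely using that at least one of the involved vertices is low-level (by the regular insertion) or that $\phi \ge 1$; (3) for $j - i$ large, noting that $S_{i,j}$ is dominated by $-(j-i)(d+\ve) + \sum(L(u_t)+L(u_{t+1})) \le -7(j-i) + 6(j-i) + \text{const} < 0$ since each term $L(u_t)+L(u_{t+1}) \le 6$ and strictly less on a positive fraction of indices. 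Once \eqref{eq:dij} is verified, Theorem \ref{thm:ub} gives $\rn(B(n,k)) = n(k+6)+1$ with the optimal labeling \eqref{eq:f0}–\eqref{eq:f}, and the hypotheses $n \ge 5$, $k \ge 4$ are presumably exactly what makes the round-robin construction feasible (enough branches to separate the leaves, enough leaves per branch for the formula to be the binding bound).
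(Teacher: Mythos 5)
Your computation of $p = nk+1$, $L(B(n,k)) = 3n(k-1)$, $d=6$, $\ve=1$ and hence of the lower bound $n(k+6)+1$ is correct and matches the paper. You also correctly diagnose that conditions (c) and (d) of Theorem \ref{thm:cor1} cannot be satisfied by any order once $k$ is large, because there are too few low-level vertices to separate the $n(k-2)$ level-$3$ leaves. But the achievability half of your argument is a genuine gap: you never fix a concrete linear order and never carry out the verification of \eqref{eq:dij}; you explicitly leave the key step open (``this is the main obstacle''), and the mechanism you propose for closing it --- same-branch savings via $\phi(u_i,u_j)\ge 1$ --- is not what actually makes the inequality work.

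The realization you are missing is that once you abandon Theorem \ref{thm:cor1} and verify \eqref{eq:dij} directly, consecutive level-$3$ leaves at distance $6$ are not a problem at all: for $j=i+1$, \eqref{eq:dij} asks only $d(u_i,u_{i+1}) \ge L(u_i)+L(u_{i+1})$, which holds with equality for any two vertices in different branches. So no interleaving of ``separators'' is needed. The paper simply orders the vertices level-class by level-class, round-robin over the $n$ branches: $u_0=w$, then the $n$ star centres (level $2$), then the $n(k-2)$ leaves (level $3$) in blocks of $n$, then the $n$ level-$1$ vertices, ending at $u_{p-1}=w^n_k\in N(w)$. With this order $S_{i,j}=\sum_{t=i}^{j-1}(L(u_t)+L(u_{t+1}))-7(j-i)+7 \le 7-(j-i)$ since every term $L(u_t)+L(u_{t+1})\le 6$; when $0<j-i<n$ the vertices $u_i,u_j$ lie in different branches so $d(u_i,u_j)=L(u_i)+L(u_j)\ge 7-(j-i)$ in each of the five range cases, and when $j-i\ge n\ge 5$ one gets $S_{i,j}\le 2\le d(u_i,u_j)$. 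This is exactly where the hypothesis $n\ge 5$ enters. Your step (3) for large $j-i$ is essentially this estimate, but without a specified order and without the small-$(j-i)$ case analysis the proof is not complete.
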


\begin{proof}
The order and total level of $B(n,k)$ are given by $p = nk + 1$ and $L(B(n,k)) = 3n(k-1)$ respectively. Plugging these into (\ref{eq:lb}), we obtain $\rn(B(n,k)) \ge n(k+6)+1$. We now prove that this lower bound is tight by giving a linear order of the vertices of $B(n,k)$ satisfying (\ref{eq:dij}).

Let $w^{i}_{1}, w^{i}_{2}, \ldots, w^{i}_{k}$ denote the vertices of the $i^{th}$ copy of the $(k-1)$-star in $B(n,k)$, where $w^{i}_{1}$ is the apex vertex (centre) and $w^{i}_{2}, \ldots, w^{i}_{k}$ are the leaves. Without loss of generality we assume that $w^{1}_{k}, w^{2}_{k}, \ldots, w^{n}_{k}$ are joined by edges to a common vertex $w$, which is the unique weight centre of $B(n,k)$.

We give a linear order $u_{0}, u_{1}, u_{2}, \ldots, u_{p-1}$ of the vertices of $B(n,k)$ as follows. We first set $u_{0}$ = $w$. Next, for $1 \leq t \leq p-1$, let
\begin{center}
$u_{t}$ := $w^{i}_{j}$, where $t$ = $(j-1)n + i$, $1 \leq i \leq n$, $1 \leq j \leq k$.
\end{center}
Note that $u_{p-1}$ = $w_{k}^{n}$ is adjacent to $w$ and for $1 \leq i \leq p-2$, $u_{i}$ and $u_{i+1}$ are in different branches so that $\phi(u_{i},u_{i+1})$ = 0.

\textsf{Claim:} The linear order $u_{0}, u_{1}, \ldots, u_{p-1}$ above satisfies (\ref{eq:dij}).

To prove this consider any two vertices $u_{i}, u_{j}$ of $B(n,k)$ with $0 \leq i < j \leq p-1$. Since the diameter of $B(n,k)$ is $6$, the right-hand side of (\ref{eq:dij}) is given by $S_{i,j} := \sum_{t=i}^{j-1}(L(u_{t})+L(u_{t+1})-7)+7$. It is easy to verify that (\ref{eq:dij}) holds when $i=0$. We assume $i \ge 1$ in the sequel.

\textsf{Case 1:} $1 \leq i < j \leq n$.~~We have $d(u_{i},u_{j}) = 4$ and $L(u_{t}) = 2$ for $i \leq t \leq j$. Hence $S_{i,j} = 7-3(j-i) \leq 4 = d(u_{i},u_{j})$.

\textsf{Case 2:} $i \leq n < j$.~~We have $L(u_{i}) = 2$ and $L(u_{t}) \leq 3$ for $i < t \leq j$ and hence $S_{i,j} \leq 5-(j-i-1)$. If $j-i < n$, then $S_{i,j} \leq 5 = d(u_{i},u_{j})$. If $j-i \geq n$, then $S_{i,j} \leq 1 \le d(u_{i},u_{j})$.

\textsf{Case 3:} $n < i < j \leq p-n-1$.~~We have $L(u_{t})$ = 3 for $i \leq t \leq j$ and hence $S_{i,j}$ = $7-(j-i)$. If $j-i < n$, then $S_{i,j} \leq 6 = d(u_{i},u_{j})$. If $j-i \geq n$, then $S_{i,j} \leq 2 \le d(u_{i},u_{j})$.

\textsf{Case 4:} $i \leq p-n-1 < j$.~~We have $L(u_{t}) \leq 3$ for $i \leq t \leq j-1$ and $L(u_{j}) = 1$, and so $S_{i,j} \leq 4-(j-i-1)$. If $j-i < n$, then $S_{i,j} \leq 4 = d(u_{i},u_{j})$. If $j-i \geq n$, then $S_{i,j} \leq 0 < 2 \le d(u_{i},u_{j})$.

\textsf{Case 5:} $p-n-1 < i < j \leq p-1$.~~We have $L(u_{t}) = 1$ for $i \leq t \leq j$. Hence $S_{i,j} = 7-5(j-i) \leq 2 = d(u_{i},u_{j})$.

So far we have proved the claim above. Therefore, by Theorem \ref{thm:ub}, we have $\rn(B(n,k)) = n(k+6)+1$ and moreover the labeling given by (\ref{eq:f0})-(\ref{eq:f}) (applied to the current case) is an optimal radio labeling of $B(n,k)$.
\qed
\end{proof}

The reader is referred to Fig. \ref{fig1} for an illustration of naming, ordering and labeling of the vertices of $B(5,4)$ by using the procedure in the proof of Theorem \ref{thm:banana}.
\begin{figure}[ht]
\centering
\includegraphics*[scale=0.3]{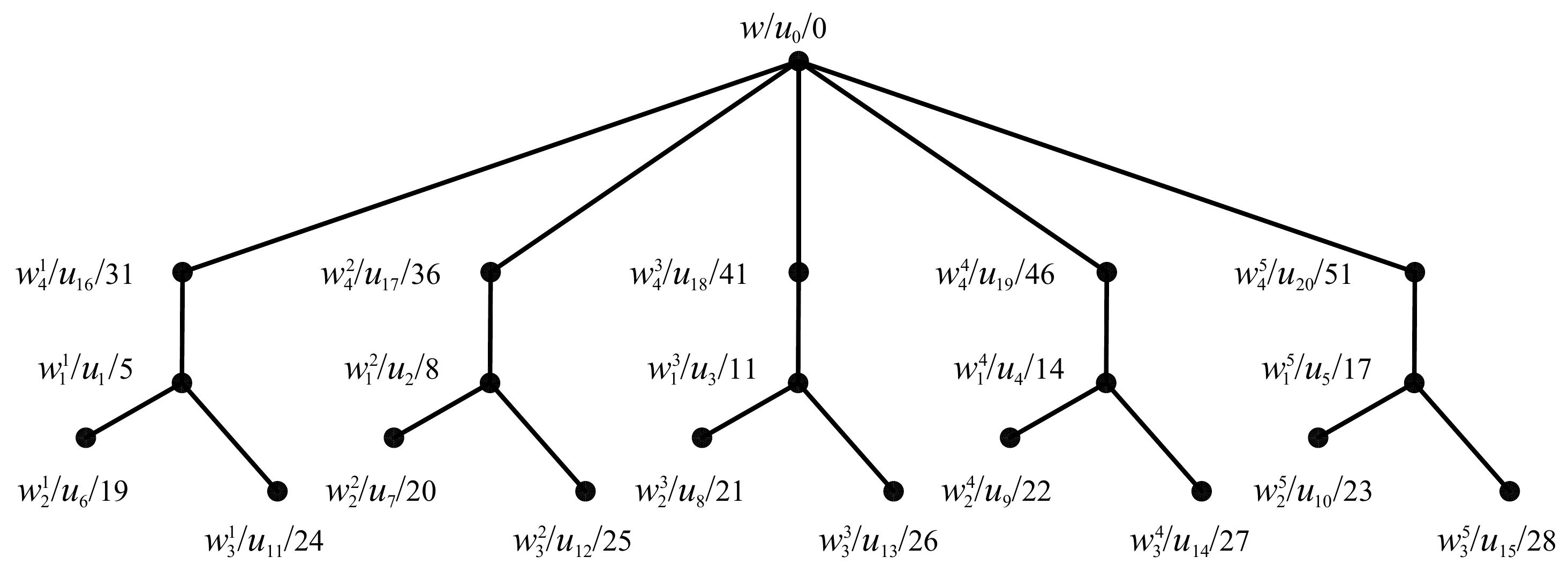}
\caption{\small An optimal radio labeling of $B(5,4)$ together with the corresponding ordering of vertices.}
\label{fig1}
\end{figure}

\subsection{Firecracker trees}

We define the \emph{$(n,k)$-firecracker tree}, denoted by $F(n,k)$, to be the tree obtained by taking $n$ copies of a $(k-1)$-star and identifying a leaf of each of them to a different vertex of a path of length $n-1$ (see Fig. \ref{fig2A}-\ref{fig2B}). It is clear that $F(n,k)$ has one or two weight centres depending on whether $n$ is odd or even.

\begin{Theorem}
\label{thm:fire}
Let $n, k \geq 3$ be integers. Denote $\ve = \ve(F(n,k))$, which is $1$ if $n$ is odd and $0$ if $n$ is even. Then
\begin{equation}
\label{fire:rn}
\rn(F(n,k)) = \frac{(n^{2}+\ve)k}{2}+5n-3.
\end{equation}
\end{Theorem}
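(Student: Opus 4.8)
The plan is to obtain the lower bound in \eqref{fire:rn} directly from Lemma~\ref{thm:lb} after computing the relevant parameters of $F(n,k)$, and then to exhibit an explicit linear order of the vertices satisfying conditions (a)--(b) of Theorem~\ref{thm:ub}; that theorem then forces equality in \eqref{fire:rn} and, via \eqref{eq:f0}--\eqref{eq:f}, yields an optimal radio labeling.

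First I would fix notation: write the spine as $v_1v_2\cdots v_n$ and, for each $i$, let $c_i$ be the centre of the $(k-1)$-star attached at $v_i$ and $\ell_{i,1},\dots,\ell_{i,k-2}$ its remaining leaves, so that $v_i\sim c_i\sim\ell_{i,s}$. Then $p=|V(F(n,k))|=nk$ and $\diam(F(n,k))=n+3$, the latter realised by a path from a leaf of the first star to a leaf of the last star. Using the weight-centre criterion recalled in Section~\ref{sec:prep}, I would check that $W(F(n,k))=\{v_{(n+1)/2}\}$ when $n$ is odd and $W(F(n,k))=\{v_{n/2},v_{n/2+1}\}$ when $n$ is even (for the latter, deleting the edge $v_{n/2}v_{n/2+1}$ leaves two components with $\frac{nk}{2}$ vertices each), so $\ve=\ve(F(n,k))$ is as stated. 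Since $L(c_i)=L(v_i)+1$ and $L(\ell_{i,s})=L(v_i)+2$, the star-column at $v_i$ contributes $kL(v_i)+(2k-3)$ to the total level, whence $L(F(n,k))=k\sum_{i=1}^{n}L(v_i)+n(2k-3)$, with $\sum_iL(v_i)=\tfrac{n^2-1}{4}$ for $n$ odd and $\tfrac{n(n-2)}{4}$ for $n$ even. Substituting $p=nk$, $d=n+3$ and these values into \eqref{eq:lb} gives, in either parity, $\rn(F(n,k))\ge\tfrac{(n^2+\ve)k}{2}+5n-3$.

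For the matching upper bound I would construct a linear order $u_0,u_1,\dots,u_{p-1}$ with $u_0=v_{(n+1)/2}$ and $u_{p-1}$ a neighbour of it when $n$ is odd, and $\{u_0,u_{p-1}\}=\{v_{n/2},v_{n/2+1}\}$ when $n$ is even, designed so that (i) $u_t$ and $u_{t+1}$ always lie in different branches ($n$ odd) or opposite branches ($n$ even), hence $\phi(u_t,u_{t+1})=0$, and (ii) vertices of large level (leaves of stars far from the centre) are spread out in the order, with any two same-branch vertices that are close together in the order lying in columns near the centre, so that $\phi(u_i,u_j)$ stays small whenever $j-i$ is small. When $n$ is odd the two mirror-image big branches at $v_{(n+1)/2}$ have equal size, and I would enumerate them in a round-robin in the spirit of the proof of Theorem~\ref{thm:banana} --- scanning the star-columns and, within each column, its leaves, then its centre, then its spine vertex --- while interspersing the $k-1$ vertices of the small central branch at $c_{(n+1)/2}$ as low-level separators for the deepest leaves; when $n$ is even I would do the analogous interleaving of the two big opposite branches, using the two small branches at $c_{n/2},c_{n/2+1}$ as the separators. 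I would then verify \eqref{eq:dij} for this order by a case analysis on $u_i,u_j$ (same branch or not, and the size of $j-i$), exactly as in the proof of Theorem~\ref{thm:banana}. When $u_i,u_j$ lie in different or opposite branches the inequality is automatic, because there \eqref{eq:dij} reduces to $\sum_{t=i+1}^{j-1}L(u_t)\le\tfrac12(j-i-1)(d+1)$ (resp.\ $\le\tfrac12(j-i-1)d$) and every level is at most $\tfrac{d}{2}$ (resp.\ $\tfrac{d-1}{2}$); when $u_i,u_j$ lie in a common branch (so $\phi(u_i,u_j)\ge1$) one checks the equivalent form $\sum_{t=i+1}^{j-1}L(u_t)+\phi(u_i,u_j)\le\tfrac12\big((j-i-1)(d+\ve)+\ve-1+\delta(u_i,u_j)\big)$, using the spacing of the deep vertices in the order and the bound on $\phi(u_i,u_j)$ by the level of the column of their deepest common ancestor. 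Theorem~\ref{thm:ub} then yields \eqref{fire:rn}, and \eqref{eq:f0}--\eqref{eq:f} gives an optimal radio labeling.

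The main obstacle is the design of this order. The two quick distance tests (c), (d) of Theorem~\ref{thm:cor1} do not seem to apply --- with diameter $n+3$ and many leaves of level about $\tfrac{n+3}{2}$, every vertex order produces long consecutive distances --- so \eqref{eq:dij} must be checked directly, and the order has to be calibrated so that, for every small index gap, the accumulated level $\sum_{t=i+1}^{j-1}L(u_t)$ of the vertices strictly between $u_i$ and $u_j$, plus $\phi(u_i,u_j)$, stays within the allowed budget. This is exactly why the deepest leaves must be separated by low-level vertices --- the role of the small central branch when $n$ is odd and of the two small branches when $n$ is even, together with the shallow parts of the big branches --- and why the odd and even cases must be treated separately, since $d+\ve$ and the weight-centre structure differ. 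Once a suitable order is pinned down, the remaining verification is routine but somewhat lengthy.
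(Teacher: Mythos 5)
Your lower-bound half is complete and correct, and it coincides with the paper's: the parameters $p=nk$, $d=n+3$, the weight centres, and the total level $L(F(n,k))=k\sum_i L(v_i)+n(2k-3)$ all check out, and substitution into \eqref{eq:lb} gives the right-hand side of \eqref{fire:rn}. Your overall strategy for the upper bound is also the paper's (an explicit order fed into Theorem \ref{thm:ub}, with \eqref{eq:dij} verified directly rather than via Theorem \ref{thm:cor1}), and your reductions of \eqref{eq:dij} in the different-branch and same-branch cases are algebraically correct.

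The gap is that the order --- the entire content of the upper-bound half --- is never actually specified, and the one you gesture at would not work. You propose a round-robin that scans ``star-columns and, within each column, its leaves, then its centre, then its spine vertex.'' Take $n$ odd and $k\ge 4$, and let $u_i,u_{i+2}$ be two leaves of the same pendant star at $v_1$. Then $L(u_i)=L(u_{i+2})=\tfrac{n+3}{2}$ and $\phi(u_i,u_{i+2})=\tfrac{n+1}{2}$, so your own same-branch form of \eqref{eq:dij} with $j=i+2$ reads $L(u_{i+1})+\tfrac{n+1}{2}\le\tfrac{d+1}{2}=\tfrac{n+4}{2}$, i.e.\ $L(u_{i+1})\le 1$; only the weight centre, its two spine neighbours and the middle apex qualify, far too few to separate the $(n-1)(k-3)$ consecutive same-column leaf pairs a column-by-column scan creates. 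The paper avoids this by ordering in the transposed direction: for each fixed within-star index $j$ it places all $n$ vertices $w^1_j,\dots,w^n_j$ consecutively (alternating sides of the centre), so two leaves of the same star sit $n$ positions apart in the order; and even then the budget $\sum_{t=i+1}^{j-1}L(u_t)+\phi(u_i,u_j)\le\tfrac12(j-i-1)(d+1)$ is tight to within a constant and has to be checked case by case on $j-i$ (the binding case being $j=i+2$ near the transitions between level classes). So the construction cannot be left as ``calibrate a suitable order''; exhibiting it and running that case analysis is the proof.
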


\begin{proof}
$F(n,k)$ has order $p = nk$, diameter $d = n+3$ and total level
$$
L(F(n,k)) =  \left\{
\begin{array}{ll}
\frac{1}{4}\left(kn^{2}+(8k-12)n-k\right), & \mbox{if $n$ is odd} \\ [0.3cm]
\frac{1}{4}\left(kn^{2}+6n(k-2)\right), & \mbox{if $n$ is even}.
\end{array}
\right.
$$
Plugging these into (\ref{eq:lb}), we obtain that the right-hand side of (\ref{fire:rn}) is a lower bound for $\rn(F(n,k))$. In what follows we prove that this lower bound is tight by giving a linear order $u_{0}, u_1, u_{2}, \ldots, u_{p-1}$ of the vertices of $F(n,k)$ satisfying (\ref{eq:dij}).

Let $w^{i}_{1}, w^{i}_{2}, \ldots, w^{i}_{k}$ denote the vertices of the $i^{th}$ copy of the $(k-1)$-star in $F(n,k)$, where $w^{i}_{1}$ is the apex vertex (centre) and $w^{i}_{2}, \ldots, w^{i}_{k}$ are the leaves. Without loss of generality we assume that $w^{1}_{k}, w^{2}_{k}, \ldots, w^{n}_{k}$ are identified to the vertices in the path of length $n-1$ in the definition of $F(n,k)$.

\textsf{Case 1}: $n$ is odd.~~In this case, $F(n,k)$ has only one weight centre, namely $w = w_{k}^{(n+1)/2}$. Set $u_{0} = w$. For $1 \leq t \leq p-n$, let
\begin{equation*}
u_{t} := w^{i}_{j}, \mbox{ where  } t = \left\{
\begin{array}{ll}
jn, & \mbox{if } i = (n+1)/2\\ [0.3cm]
(j-1)n + 2i-1, & \mbox{if } i < (n+1)/2 \\ [0.3cm]
(j-1)n + 2\left(i - \frac{n+1}{2}\right), & \mbox{if } i > (n+1)/2.
\end{array}
\right.
\end{equation*}
For $p-n+1 \leq t \leq p-1$, let
\begin{equation*}
u_{t} := w^{i}_{j}, \mbox{ where  } t = \left\{
\begin{array}{ll}
(j-1)n - 2\left(i - \frac{n-1}{2}\right)+1, & \mbox{if } i < (n+1)/2 \\ [0.3cm]
(j-1)n + 2(n-i+1), & \mbox{if } i > (n+1)/2.
\end{array}
\right.
\end{equation*}
Note that $u_{p-1} = w_{k}^{(n+3)/2}$ is adjacent to $w$ and for $1 \leq i \leq p-2$, $u_{i}$ and $u_{i+1}$ are in different branches so that $\phi(u_{i},u_{i+1})$ = 0.

\textsf{Claim 1:} The linear order $u_{0}, u_{1}, \ldots, u_{p-1}$ above satisfies (\ref{eq:dij}).

In fact, for any two vertices $u_{i}, u_{j}$ with $0 \leq i < j \leq p-1$, the right-hand side of (\ref{eq:dij}) is $S_{i,j} := \sum_{t=i}^{j-1} \left(L(u_{t})+L(u_{t+1})-(d+1)\right)+(d+1)$, where $d=n+3 \ge 6$ is the diameter of $F(n,k)$. If $j$ = $i+1$ or $i = 0$, then it is straightforward to verify that (\ref{eq:dij}) is satisfied. Note that, if $j \geq i+3$, then for at least one $t$, $L(u_{t})+L(u_{t+1}) \leq (d+4)/2$ and $L(u_{t})+L(u_{t+1}) \leq (d+6)/2$ for all $i \le t \le j-1$; hence $S_{i,j} \le (j-i-1) ((d+6)/2 - (d+1)) + (d+4)/2 \le 2 ((d+6)/2 - (d+1)) + (d+4)/2 = (-d+12)/2 \le d(u_{i},u_{j})$ as $d \ge 6$. Thus (\ref{eq:dij}) is satisfied when $j \geq i+3$. It remains to consider the case $j = i+2 \ge 3$. In this case we have $S_{i,j} = L(u_{i})+2L(u_{i+1})+L(u_{i+2})-(d+1)$ and one can verify the following: If $1 \leq i \leq n-2$, then $S_{i,j} = 0 < d(u_{i},u_{j}) = 3$; if $n-1 \le i \leq n$, then $S_{i,j} \leq 3 < d(u_{i},u_{j})$; if $n+1 \le i \leq p-n-2$, then $S_{i,j} \leq 4 < d(u_{i},u_{j})$; if $i = p-n-1$, then $S_{i,j} = (-d+8)/2 \le 1 < d(u_{i},u_{j})$; if $i = p-n$, then $S_{i,j} = (-d+2)/2 < 0 < d(u_{i},u_{j})$; if $p-n+1 \le i \leq p-3$, then $S_{i,j} = -2 < d(u_{i},u_{j})$. This completes the proof of the claim.

\textsf{Case 2}: $n$ is even.~~In this case, $F(n,k)$ has two weight centres, namely $w = w_{k}^{n/2}$ and $w' = w_{k}^{n/2+1}$. We set $u_{0} = w'$ and $u_{p-1} = w$. For $1 \leq t \leq p-n+1$, let
\begin{equation*}
u_{t} := w^{i}_{j}, \mbox{ where  } t = \left\{
\begin{array}{ll}
(j-1)n + 2i - 1, & \mbox{if } i \leq n/2 \\ [0.3cm]
(j-1)n + 2\left(i-\frac{n}{2}\right), & \mbox{if } i > n/2.
\end{array}
\right.
\end{equation*}
For $p-n+2 \leq t \leq p-2$, let
\begin{equation*}
u_{t} := w^{i}_{j}, \mbox{ where  } t = \left\{
\begin{array}{ll}
(j-1)n + 2i - 1, & \mbox{if } i < n/2 \\ [0.3cm]
(j-1)n + 2\left(i-1-\frac{n}{2}\right), & \mbox{if } i > n/2 + 1.
\end{array}
\right.
\end{equation*}

Note that $u_{p-1} = w_{k}^{n/2}$ is adjacent to $w'$ and for $1 \leq i \leq p-2$, $u_{i}$ and $u_{i+1}$ are in opposite branches so that $\phi(u_{i},u_{i+1})$ = 0 and $\delta(u_{i},u_{i+1})$ = 1.

\textsf{Claim 2:} The linear order $u_{0}, u_{1}, \ldots, u_{p-1}$ above satisfies (\ref{eq:dij}).

In fact, for any two vertices $u_{i}, u_{j}$ with $0 \leq i < j \leq p-1$, the right-hand side of (\ref{eq:dij}) is $S_{i,j} := \sum_{t=i}^{j-1} \left(L(u_{t})+L(u_{t+1})-d\right)+(d+1)$, where $d=n+3 \ge 6$. If $j$ = $i+1$ or $i = 0$, then it is easy to verify that (\ref{eq:dij}) is satisfied. Note that, if $j \geq i+3$, then $L(u_{t})+L(u_{t+1}) \leq (d+3)/2$ for $i \le t \le j-1$ and hence $S_{i,j} \le (j-i) ((d+3)/2 - d) + (d+1) \le 3 ((d+3)/2 - d) + (d+1) = -(d+11)/2 < d(u_{i},u_{j})$. It remains to consider the case $j = i+2 \ge 3$. In this case $S_{i,j} = L(u_{i})+2L(u_{i+1})+L(u_{i+2})-d+1$ and the following hold: If $1 \leq i \leq n-2$, then $S_{i,j} = -1 < 3 = d(u_{i},u_{j})$; if $n-1 \le i \leq n$, then $S_{i,j}  \leq (3d-1)/2 -d+1 = (d+1)/2 \le d(u_{i},u_{j})$; if $n+1 \le i \leq p-n-2$, then $S_{i,j} = 3 < 5 \le d(u_{i},u_{j})$; if $i = p-n-1$, then $S_{i,j} = (d-1)/2 = d(u_{i},u_{j})$; if $i = p-n$, then $S_{i,j} = (d-7)/2 < (d-3)/2 = d(u_{i},u_{j})$; if $p-n+1 \le i \leq p-3$, then $S_{i,j} = -3 < d(u_{i},u_{j})$. This completes the proof of Claim 2.

In summary, in each case above we have defined a linear order $u_{0}, u_{1}, \ldots, u_{p-1}$ of the vertices of $F(n,k)$ which satisfies (\ref{eq:dij}). Therefore, by Theorem \ref{thm:ub}, $\rn(F(n,k))$ is given by (\ref{eq:ub}) which is exactly the right-hand side of (\ref{fire:rn}) in the case of firecracker trees. The labeling given by (\ref{eq:f0})-(\ref{eq:f}) (applied to the current situation) is an optimal radio labeling of $F(n,k)$.
\qed
\end{proof}

Fig. \ref{fig2A} and \ref{fig2B} give illustrations of naming, ordering and labeling of the vertices of $F(5,4)$ and $F(6,4)$, respectively, by using the procedure in the proof of Theorem \ref{thm:fire}.

\begin{figure}[ht]
\centering
\includegraphics*[scale=0.3]{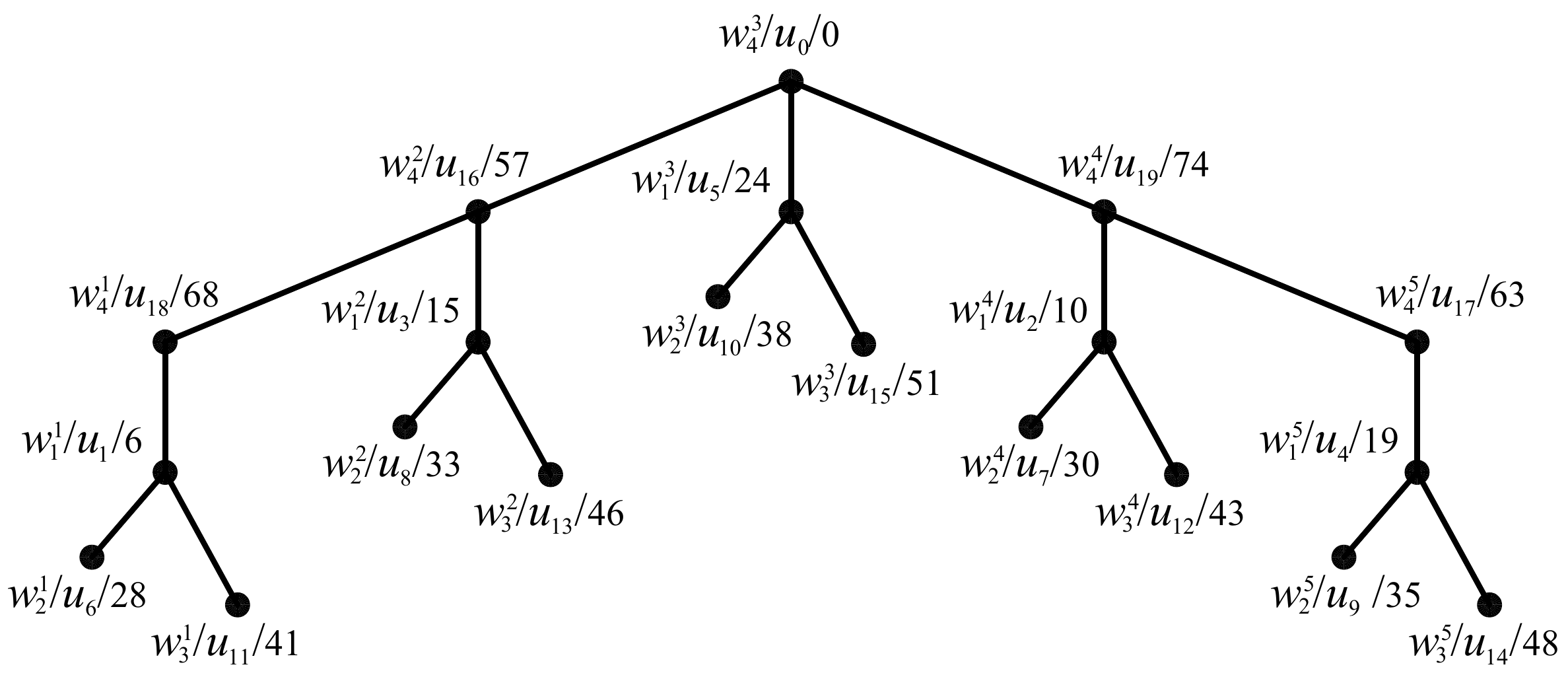}
\caption{\small An optimal radio labeling of $F(5,4)$ together with the corresponding ordering of vertices.}
\label{fig2A}
\end{figure}

\begin{figure}[ht]
\centering
\includegraphics*[scale=0.3]{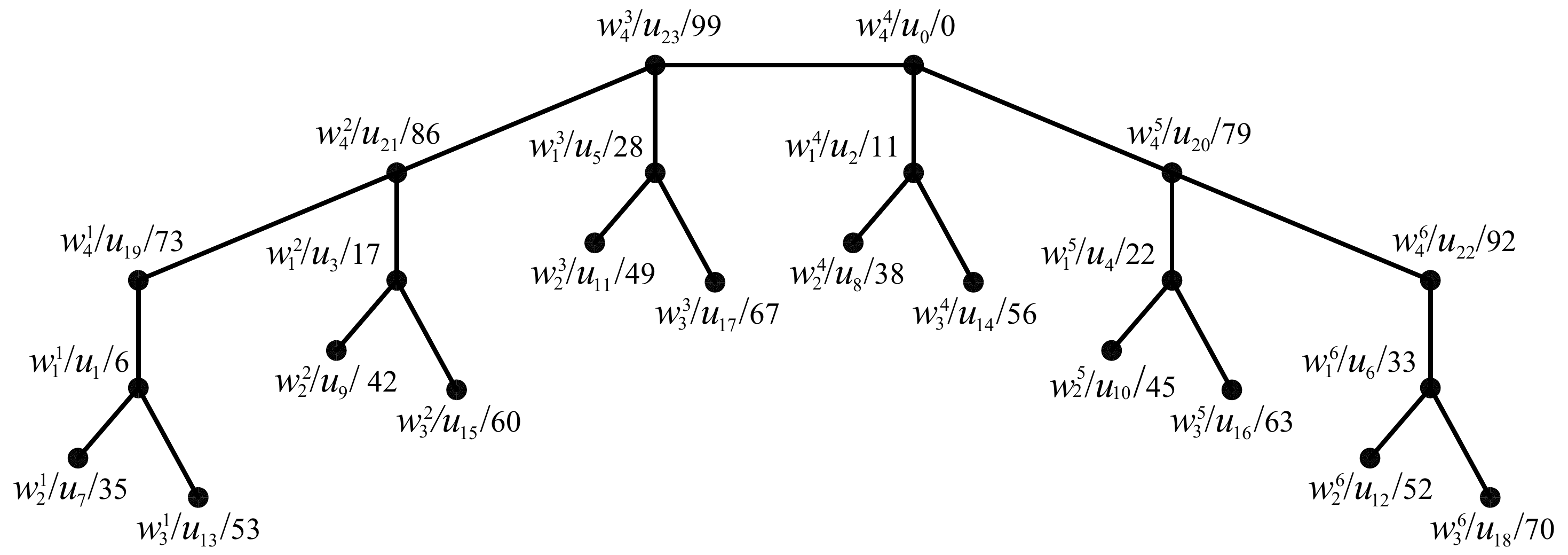}
\caption{\small An optimal radio labeling of $F(6,4)$ together with the corresponding ordering of vertices.}
\label{fig2B}
\end{figure}

\subsection{Caterpillars}

A tree is called a \emph{caterpillar} if the removal of all its degree-one vertices results in a path, called the \emph{spine}. Denote by $C(n, k)$ the caterpillar in which the spine has length $n-3$ and all vertices on the spine have degree $k$, where $n \ge 3$ and $k \ge 2$. Note that $\ve(C(n,k)) = 1$ when $n$ is odd and $\ve(C(n,k)) = 0$ when $n$ is even. Note also that $C(n, 2) = P_n$ is the path with $n$ vertices.

\begin{Theorem}
\label{thm:cater}
Let $n \geq 4$ and $k \ge 2$ be integers. Denote $\ve = \ve(C(n,k))$. Then
\begin{equation}
\label{rn:cater}
\rn(C(n,k)) = \frac{1}{2}\left((n-2)^{2}+\ve\right)(k-1)+n-1+\ve.
\end{equation}
\end{Theorem}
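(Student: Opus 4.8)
The plan is to follow the same template used for banana and firecracker trees: compute the structural parameters of $C(n,k)$, plug them into the lower bound (\ref{eq:lb}) to get the right-hand side of (\ref{rn:cater}), and then exhibit an explicit linear order of the vertices satisfying (\ref{eq:dij}), so that Theorem \ref{thm:ub} gives the exact value. First I would record that $C(n,k)$ has order $p = (n-3+1)(k-2) + (n-2) = (n-2)(k-1)+1$ wait --- more carefully, the spine has $n-2$ vertices (a path of length $n-3$), each carrying $k-2$ pendant leaves, plus $2$ extra leaves at the two ends to make the end spine-vertices also have degree $k$; so $p$ and the diameter $d = n-1$ should be computed directly, and the total level $L(C(n,k))$ computed by summing $L(u)$ over spine vertices (which form a path centred at the weight centre(s)) and over the pendant leaves (each at level one more than its spine vertex). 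These are routine sums of arithmetic-progression type, split into the cases $n$ odd (one weight centre, $\ve=1$) and $n$ even (two weight centres, $\ve=0$), and substituting into $(p-1)(d+\ve) - 2L(T) + \ve$ should reproduce $\tfrac12((n-2)^2+\ve)(k-1) + n - 1 + \ve$.

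Next I would describe the ordering. The weight centre(s) lie in the middle of the spine; a branch at a spine vertex $v$ (other than a weight centre) consists of $v$, all its pendant leaves, and the rest of the spine hanging off it. The natural idea, mimicking the path case of Liu--Zhu and the firecracker construction, is to traverse the spine vertices in an order that alternates between the two sides of the weight centre so that consecutive chosen spine vertices are in different (or opposite) branches, and to interleave the pendant leaves appropriately: within each "pass" along the spine we place one representative per spine position, and we make several passes so that the $k-2$ pendants of each spine vertex plus the spine vertex itself are distributed across passes. I would then set $u_0 = w$ (resp. $u_0 = w'$, $u_{p-1}=w$ when $n$ is even) and $u_{p-1}$ a neighbour of $w$, and give $u_t$ by an explicit formula in terms of $t$ analogous to those in the proof of Theorem \ref{thm:fire}, arranging that consecutive vertices are always in different/opposite branches so $\phi(u_i,u_{i+1}) = 0$.

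Rather than verifying (\ref{eq:dij}) directly, I expect it will be cleaner here to invoke Theorem \ref{thm:cor1}: I would check condition (b) (consecutive vertices in different/opposite branches, built into the construction) and then one of (c) or (d). Since levels in a caterpillar are at most $\lfloor (n-2)/2\rfloor + 1 \approx d/2$, distances between vertices in different branches are roughly $d/2$ to $d$, and with the alternating order one can hope that $\min\{d(u_i,u_{i+1}), d(u_{i+1},u_{i+2})\} \le (d+1-\ve)/2$ always holds --- i.e. every second step is "short" --- which is exactly (c). So the key verification reduces to: in the proposed order, of any three consecutive vertices, at least one of the two consecutive pairs has small distance, equivalently small level-sum. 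This should follow because each "pass" over the spine returns near the centre, so short hops occur regularly. For the path special case $k=2$ there are no pendants and the order should reduce to (a reindexing of) the Liu--Zhu order, recovering their formula.

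The main obstacle I anticipate is bookkeeping: getting the explicit index formula for $u_t$ correct across the four sub-cases ($n$ odd/even $\times$ the two halves of the construction, as in the firecracker proof), making sure every vertex is listed exactly once, that the end conditions in (a) hold, and that the "at least one short hop among three consecutive" property (hence (c), or else (d)) genuinely holds at the junctions between passes and near the centre --- these boundary positions are where the analogous firecracker proof needed separate case checks ($i = p-n-1$, $i = p-n$, etc.), and the same delicate spots will need individual attention here. A secondary nuisance is simply the arithmetic verifying that the lower bound from (\ref{eq:lb}) matches (\ref{rn:cater}) after substituting the piecewise formula for $L(C(n,k))$; once the ordering is shown to satisfy the hypotheses of Theorem \ref{thm:cor1}, the conclusion $\rn(C(n,k)) = \tfrac12((n-2)^2+\ve)(k-1)+n-1+\ve$ and the optimality of the labeling (\ref{eq:f0})--(\ref{eq:f}) are immediate.
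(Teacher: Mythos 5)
Your plan has a genuine gap in the case of odd $n$. You assume throughout that substituting $p$, $d$ and $L(C(n,k))$ into the lower bound \eqref{eq:lb} ``should reproduce'' the right-hand side of \eqref{rn:cater}, and that it then suffices to exhibit an ordering satisfying \eqref{eq:dij} (or the hypotheses of Theorem \ref{thm:cor1}) so that Theorem \ref{thm:ub} closes the argument. But for $n$ odd ($\ve = 1$) the lower bound \eqref{eq:lb} gives only $\tfrac12\left((n-2)^{2}+1\right)(k-1)+n-1$, which is \emph{one less} than the claimed value $\tfrac12\left((n-2)^{2}+1\right)(k-1)+n$. Consequently no linear order satisfying the conditions of Theorem \ref{thm:ub} can exist when $n$ is odd, and your search for one is doomed; any ordering you produce will violate \eqref{eq:dij} somewhere. (Your own sanity check already signals this: for $k=2$ and $n=2m+1$ the Liu--Zhu value is $2m^{2}+2$, whereas the naive lower bound is $2m^{2}+1$.) The correct argument for odd $n$ has two extra pieces that your proposal omits entirely: first, an ordering with $L(u_{0})+L(u_{p-1})=2$ (e.g.\ starting at a level-one spine vertex rather than at the weight centre) fed into Theorem \ref{thm:ub1} to get the upper bound ``lower bound plus one''; second, an impossibility argument showing the lower bound itself is unattainable --- one shows that in any order meeting Theorem \ref{thm:ub}(a)--(b), among the $2(k-1)$ vertices at maximum level $m$ in the two long branches there must be some $u_{t}$ with $d(u_{t-1},u_{t})\ge m+a$ and $d(u_{t},u_{t+1})\ge m+b$ for distinct $a,b\ge 1$, whence $S_{t-1,t+1}=a+b-1>|a-b|=d(u_{t-1},u_{t+1})$, contradicting \eqref{eq:dij}.

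For even $n$ your outline is essentially the paper's: the lower bound does match \eqref{rn:cater}, and an alternating spine-then-pendants order with consecutive vertices in opposite branches verifies \eqref{eq:dij} (the paper checks \eqref{eq:dij} directly for the various index ranges, invoking Theorem \ref{thm:cor1}(b),(d) only for the sub-case where both $u_i$ and $u_{i+2}$ lie on the spine path, rather than using (c) globally as you suggest). That part is workable, but as written the proposal does not prove the theorem because the odd case is handled by an argument that cannot succeed.
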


In the special case when $k = 2$, Theorem \ref{thm:cater} gives the following known result.

\begin{Corollary}
\label{coro:cater}
(\cite{Liu})
Let $n \geq 4$ be an integer. Then
\begin{equation}\label{rn:path}
\rn(P_{n}) =
\left\{
  \begin{array}{ll}
    2m^{2}+2, & \hbox{if }n=2m+1  \\[0.2cm]
    2m(m-1)+1, & \hbox{if }n=2m.
  \end{array}
\right.
\end{equation}
\end{Corollary}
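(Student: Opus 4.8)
The plan is to observe that the path $P_n$ is exactly the caterpillar $C(n,2)$, as noted immediately before Theorem~\ref{thm:cater}, so that the corollary is nothing more than the specialization of \eqref{rn:cater} to the case $k=2$. Since $n \ge 4$ ensures $\diam(P_n) = n-1 \ge 2$, Theorem~\ref{thm:cater} applies directly and no separate argument is needed.

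First I would substitute $k=2$ into \eqref{rn:cater}. As $k-1 = 1$, the first factor drops out and the formula becomes
\[
\rn(P_n) = \frac{1}{2}\left((n-2)^2 + \ve\right) + n - 1 + \ve,
\]
where $\ve = \ve(C(n,2))$ equals $1$ when $n$ is odd and $0$ when $n$ is even. The remaining work is simply to evaluate this expression separately for the two parities.

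For odd $n$, I would write $n = 2m+1$, so that $\ve = 1$ and $n-2 = 2m-1$; substituting $(2m-1)^2 = 4m^2 - 4m + 1$ and collecting terms yields $\rn(P_n) = (2m^2 - 2m + 1) + 2m + 1 = 2m^2 + 2$. For even $n$, I would write $n = 2m$, so that $\ve = 0$ and $n - 2 = 2(m-1)$; substituting $(2(m-1))^2 = 4(m-1)^2$ gives $\rn(P_n) = 2(m-1)^2 + 2m - 1 = 2m(m-1) + 1$. These are precisely the two cases of \eqref{rn:path}.

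There is no genuine obstacle here: the entire content of the corollary resides in Theorem~\ref{thm:cater}, and the proof reduces to the two elementary simplifications above. The only point requiring minor care is to insert the correct parity-dependent value of $\ve$ in each case, since both the coefficient $\frac{1}{2}\left((n-2)^2 + \ve\right)$ and the additive constant $n - 1 + \ve$ depend on it.
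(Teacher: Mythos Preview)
Your proposal is correct and is exactly the approach the paper takes: the corollary is stated as the specialization $k=2$ of Theorem~\ref{thm:cater}, and the paper offers no proof beyond that remark. Your explicit arithmetic verifying the two parity cases is a useful addition but introduces nothing new.
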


The radio number of $C(n, k)$ for even $n$ was considered in \cite{KP}. However, the formula in \cite[Theorem 2.3]{KP} seems incorrect -- it is bigger by one than the actual value of $\rn(C(n, k))$ shown in \eqref{rn:cater}. (We have taken into account that the radio number defined in \cite{KP} is bigger than the usual definition by one.) For example, $\rn(C(10, 4) = 105$ as shown in Fig. \ref{Cater2}, while \cite[Theorem 2.3]{KP} gives $106$.

\medskip
\begin{proof}{\bf of Theorem \ref{thm:cater}}~
$C(n,k)$ has order $p = n+(n-2)(k-1)$, diameter $d = n-1$ and total level
$$
L(C(n,k)) =
\left\{
  \begin{array}{ll}
    \frac{(n^{2}-5)(k-1)}{4}+1, & \hbox{if }n=2m+1 \\[0.2cm]
    \frac{n(n-2)(k-1)}{4}, & \hbox{if }n=2m.
  \end{array}
\right.
$$
Plugging these into (\ref{eq:lb}), we obtain
\begin{equation}
\label{eq:clb}
\rn(C(n,k)) \geq \frac{1}{2}\left((n-2)^{2}+\ve\right)(k-1)+n-1.
\end{equation}
Denote by $v_{2} \ldots v_{n-1}$ the spine of $C(n,k)$. Choose $v_1$ and $v_n$ to be distinct degree-one vertices of $C(n,k)$ adjacent to $v_2$ and $v_{n-1}$, respectively. For each $2 \leq i \leq n-1$, denote by $v_{i,j}$, $1 \leq j \leq k-2$ the neighbours of $v_{i}$ not on the path $v_{1} v_{2} \ldots v_{n-1} v_n$.

\textsf{Case 1:} $n = 2m+1$ is odd.

In this case, $C(n,k)$ has only one weight centre, namely $v_{m+1}$, and so $\ve = 1$. We first prove \be
\label{eq:cub}
\rn(C(n,k)) \le \frac{1}{2}\left((n-2)^{2}+1\right)(k-1)+n
\ee
by using Theorem \ref{thm:ub1}. To this end we define a linear order $u_0, u_1, \ldots, u_{p-1}$ of the vertices of $C(n, k)$ as follows. Set $u_{0}$ = $v_{m}$, $u_{1}$ = $v_{2m}$, $u_{2}$ = $v_{1}$, $u_{3}$ = $v_{m+1}$, $u_{4}$ = $v_{2m+1}$ and $u_{p-1}$ = $v_{m+2}$. Relabel the remaining vertices on the spine by setting
\begin{equation}\label{ord1}
u_{t} = v_{i}, \mbox{ where } t =
\left\{
  \begin{array}{ll}
    2(m-i)+3, & \hbox{if } 2 \le i \le m-1  \\[0.2cm]
    2(2m-i+2), & \hbox{if } m+3 \le i < 2m.
  \end{array}
\right.
\end{equation}
We obtain $u_{5}, \ldots, u_{n-2}$ in this way. Set
\begin{equation*}
u_{t} = v_{i,j}, \mbox{ where } t =
\left\{
  \begin{array}{ll}
    2m+2(k-2)(i-3)+2j-1, & \hbox{if }3 \le i \le m,\ 1 \le j \le k-2  \\[0.2cm]
    2m+2(k-2)(i-m-2)+2(j-1), & \hbox{if }m+2 \le i \le 2m-1,\ 1 \le j \le k-2
  \end{array}
\right.
\end{equation*}
to obtain $u_{n-1}, \ldots, u_{p-3k+4}$. Finally, set
\begin{equation*}
u_{t} = v_{i,j}, \mbox{ where } t =
\left\{
  \begin{array}{ll}
    2m+2(k-2)(m-2)+3j-1, & \hbox{if }i = 2,\ 1 \le j \le k-2 \\[0.2cm]
    2m+2(k-2)(m-2)+3j-2, & \hbox{if }i = m+1,\ 1 \le j \le k-2 \\[0.2cm]
    2m+2(k-2)(m-2)+3(j-1), & \hbox{if }i = 2m,\ 1 \le j \le k-2
  \end{array}
\right.
\end{equation*}
to obtain $u_{p-3k+5}, \ldots, u_{p-1}$.

\textsf{Claim 1:} The linear order $u_{0}, u_{1}, \ldots, u_{p-1}$ above satisfies condition (\ref{eq:dij}).

In fact, denoting the right-hand side of (\ref{eq:dij}) with respect to the order above by $S_{i,j}$, we have $S_{i,j}$ = $\sum_{t=i}^{j-1}(L(u_{t})+L(u_{t+1})-2m-1)+2m+1$. It is easy to verify that (\ref{eq:dij}) is satisfied if $j = i+1$, or $i = 0$, or $i = p-1$. If $j \geq i+3$, then for $1 \le i < j \le n-2$, $u_{t}$ with $i \le t \le j-2$ satisfies (b) and (c) in Theorem \ref{thm:cor1} and hence (\ref{eq:dij}) is satisfied; for $i \le n-2 < j$ or $n-2 \le i < j \le p-3k+4$, we have $L(u_{t})+L(u_{t+1}) \leq m+2$ for $i \leq t \leq j-1$ and hence $S_{i,j} \leq (j-i)(m+2-(2m+1))+(2m+1) \leq 3(-m+1)+(2m+1) = -m+4 \leq 3 \leq d(u_{i},u_{j})$; for $i \le p-3k+4 < j \le p-2$ or $p-3k+4 < i < j \le p-2$, we have $S_{i,j} \le 1 \le d(u_{i},u_{j})$. Assume $j$ = $i+2 \geq 3$ in the remaining proof, so that $S_{i,j}$ = $L(u_{i})+2L(u_{i+1})+L(u_{i+2})-(2m+1)$. If $1 \le i \le n-4$, then $u_{t}$ with $i \le t \le j-2$ satisfies (b) and (c) in Theorem \ref{thm:cor1} and hence (\ref{eq:dij}) is satisfied. If $n - 4 < i \le n-2$, or $n-2 < i \le p-3k+2$, or $p-3k+3 \le i \le p-3k+4$ and $k \ge 3$, then $S_{i,j} \le 2 \leq d(u_{i},u_{j})$. If $p-3k+4 < i \le p-4$, then $L(u_{i}) + 2L(u_{i+1}) + L(u_{i+2}) \leq 3m+1$ and hence $S_{i,j} \leq m \leq d(u_{i},u_{j})$. This proves Claim 1.

Since $L(u_{0})+L(u_{p-1}) = 2$, we obtain \eqref{eq:cub} immediately from Theorem \ref{thm:ub1} and Claim 1.

In view of \eqref{eq:clb} and \eqref{eq:cub}, it remains to prove $\rn(C(n,k)) \ne \frac{1}{2}\left((n-2)^{2}+1\right)(k-1)+n-1$. Suppose otherwise. Then by Theorem \ref{thm:ub} there exists a linear order $u_{0}, u_{1}, \ldots, u_{p-1}$ of the vertices of $C(n,k)$ satisfying (\ref{eq:dij}) such that $L(u_{0})+L(u_{p-1}) = 1$ and $u_{i}$ and $u_{i+1}$ are in different branches. Denote by $T, T'$ the branches of $C(n,k)$ containing $v_1, v_n$, respectively. (Each of the other $k-2$ branches contains only one vertex.) Denote $S = \{u: u \in V(T), L(u) = m\}$ and $S^{'} = \{u: u \in V(T'), L(u) = m\}$. Then $|S| = |S^{'}| = k-1$. Since $L(u_{0})+L(u_{p-1}) = 1$ and span$(f)-f$ is an optimal radio labeling whenever $f$ is an optimal radio labeling, without loss of generality we may assume that $u_{0}$ = $v_{m+1}$ and $u_{p-1} \in N(u_{0})$.

Since $u_{0} = v_{m+1}$ and $u_{i}$ and $u_{i+1}$ are in different branches, there exists a vertex $u_{t} \in S$ such that $d(u_{t-1}, u_{t}) \geq m+a$, $d(u_{t}, u_{t+1}) \geq m + b$, $d(u_{t-1}) \neq 1$ and $d(u_{t+1}) \neq 1$ for some $a, b \geq 1$ with $a \neq b$. Hence $S_{t-1, t+1} = L(u_{t-1})+2L(u_{t})+L(u_{t+1})-(2m+1) = (m+a)+(m+b)-(2m+1) = a+b-1 > |a-b| = d(u_{t-1},u_{t+1})$, contradicting the assumption that (\ref{eq:dij}) is satisfied for any $0 \le i < j \le p-1$. Therefore, $\rn(C(n,k)) = \frac{1}{2}\left((n-2)^{2}+1\right)(k-1)+n$.

\textsf{Case 2:} $n = 2m$ is even.

In this case, $C(n,k)$ has two adjacent weight centres, namely $v_{m}$ and $v_{m+1}$, and so $\ve = 0$. It suffices to prove the existence of a linear order $u_{0}, u_{1}, \ldots, u_{p-1}$ of the vertices of $C(n,k)$ such that the conditions of Theorem \ref{thm:ub} are satisfied. Set $u_{0} = v_m$ and $u_{p-1} = v_{m+1}$. Set
\begin{equation}\label{ord2}
u_{t} = v_{i}, \mbox{ where } t =
\left\{
  \begin{array}{ll}
    2(m-i), & \hbox{if } 1 \le i \le m-1  \\[0.2cm]
    2(2m-i)+1, & \hbox{if } m+2 \le i \le 2m.
  \end{array}
\right.
\end{equation}
We obtain $u_1, \ldots, u_{n-2}$ in this way. Let
\begin{equation*}
u_{t} = v_{i,j}, \mbox{ where } t =
\left\{
  \begin{array}{ll}
    2m+2(k-2)(i-2)+2(j-1), & \hbox{if }i \leq m,\ 1 \le j \le k-2  \\[0.2cm]
    2m+2(k-2)(i-m-1)+2j-3, & \hbox{if }i>m,\ 1 \le j \le k-2.
  \end{array}
\right.
\end{equation*}
to obtain $u_{n-1}, \ldots, u_{p-2}$. Note that $\{u_{0}, u_{p-1}\} = \{v_m, v_{m+1}\} = W(C(n,k))$ and $u_{i}$ and $u_{i+1}$ are in opposite branches for $1 \leq i \leq p-1$. It remains to prove the following:

\textsf{Claim 2:} The linear order $u_{0}, u_{1}, \ldots, u_{p-1}$ above satisfies condition (\ref{eq:dij}).

In fact, denoting the right-hand side of (\ref{eq:dij}) with respect to the order above by $S_{i,j}$, we have $S_{i,j} = \sum_{t=i}^{j-1}(L(u_{t})+L(u_{t+1})-2m+1)+2m$. It is easy to verify that (\ref{eq:dij}) is satisfied when $j$ = $i+1$, or $i$ = 0, or $i$ = $p-1$. If $j \geq i+3$, then for $n = 4$ we have $L(u_{t})+L(u_{t+1}) \leq m$ for $i \leq t \leq j-1$ and hence $S_{i,j} \leq (j-i)(m-(2m-1))+2m \leq 3(-m+1)+2m = -m+3 \leq 1 \leq d(u_{i},u_{j})$; and for $n \geq 6$ we have $L(u_{t})+L(u_{t+1}) \leq m+1$ for $i \leq t \leq j-1$ and hence $S_{i,j} \leq (j-i)(m+1-(2m-1))+2m \leq 3(-m+2)+2m = -m+6 \leq 3 \leq d(u_{i},u_{j})$. If $j$ = $i+2 \geq 3$, then $S_{i,j} = L(u_{i})+2L(u_{i+1})+L(u_{i+2})-2m+2$. If both $u_{i}$ and $u_{j}$ are on the path $v_{1} v_{2} \ldots v_{n}$, then they satisfy (b) and (d) in Theorem \ref{thm:cor1} and hence satisfy (\ref{eq:dij}). If $u_{i}$ is on the path $v_{1} v_{2} \ldots  v_{n}$ but $u_{j}$ is not, then $S_{i,j} = 2 \leq d(u_{i},u_{j})$ since $L(u_{t})+L(u_{t+1}) \leq m$ for every $t$. If neither $u_{i}$ nor $u_{j}$ is on the path $v_{1} v_{2} \ldots v_{n}$, then either $L(u_{i})+2L(u_{i+1})+L(u_{i+2}) \leq 2m$ or $L(u_{i})+2L(u_{i+1})+L(u_{i+2}) \leq 2m+1$, and hence $S_{i,j} = 2 \leq d(u_{i},u_{j})$ or $S_{i,j} = 3 \leq d(u_{i},u_{j})$, respectively. This completes the proof of Claim 2.

So far we have completed the proof of (\ref{rn:cater}). Moreover, by Theorems \ref{thm:ub} and \ref{thm:ub1}, the labeling given by (\ref{eq:f0})-(\ref{eq:f}) with respect to the linear order above is an optimal radio labeling of $C(n,k)$.
\qed
\end{proof}

The reader is referred to Fig. \ref{Cater1} and \ref{Cater2} for an illustration of naming, ordering and labeling of the vertices of $C(9,4)$ and $C(10,4)$ by using the procedure in the proof of Theorem \ref{thm:cater}.
\begin{figure}[ht]
\centering
\includegraphics*[scale=0.3]{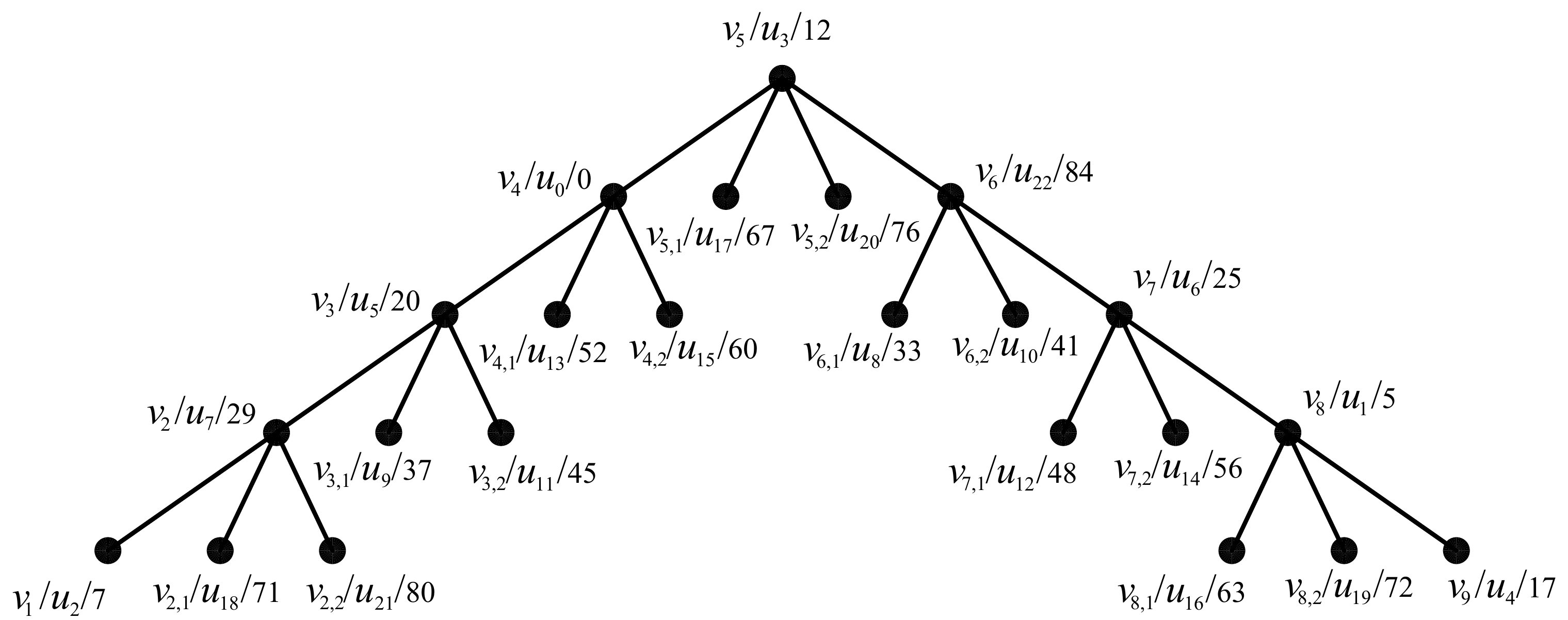}
\caption{\small An optimal radio labeling of $C(9,4)$ together with the corresponding ordering of vertices.}
\label{Cater1}
\end{figure}

\begin{figure}[ht]
\centering
\includegraphics*[scale=0.3]{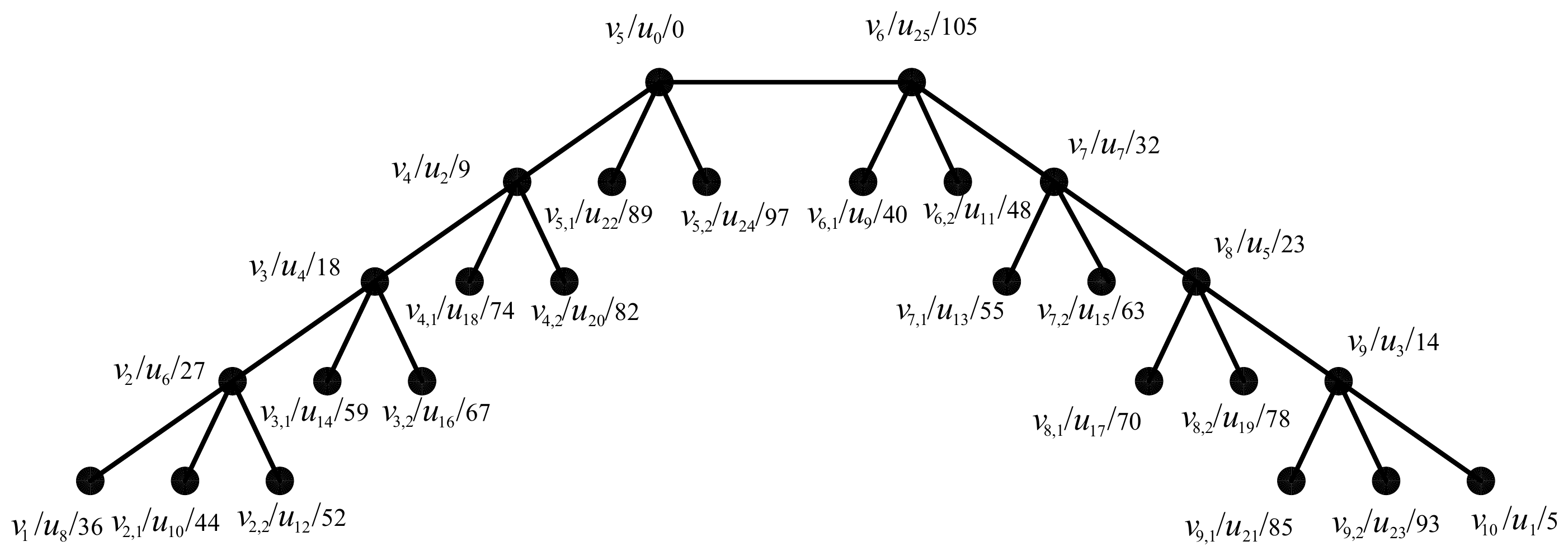}
\caption{\small An optimal radio labeling of $C(10,4)$ together with the corresponding ordering of vertices.}
\label{Cater2}
\end{figure}

\section{Concluding remarks}
\label{sec:rem}

It is well known that the centre of any tree $T$ consists of one vertex $r$ or two adjacent vertices $r, r'$, depending on whether $\diam(T)$ is even or odd. We may think of $T$ as a rooted tree with root $r$ or $\{r, r'\}$, respectively. Hal\'{a}sz and Tuza \cite{Tuza} defined a \emph{level-wise regular tree} to be a tree $T$ in which all vertices at distance $i$ from root $r$ or $\{r, r'\}$ have the same degree, say $m_{i}$, for $0 \le i \le h$, where $h$ is the \emph{height} of $T$, namely the largest distance from a vertex to the root. If $m_{0} = m_{1} = \cdots = m_{h-1} = m$ and $m_{h} = 1$, then $T$ is called an \emph{internally $m$-regular complete tree} \cite{Tuza}. It can be verified that the centre and the weight centre of such a tree are identical.

Hal\'{a}sz and Tuza \cite[Theorem 1]{Tuza} proved that the radio number of the internally $(m+1)$-regular complete tree $T$ with diameter $d$ and height $h = \lfloor d/2 \rfloor$, where $d, m \geq 3$, is given by
\begin{equation}\label{rn:levelT}
\rn(T) =
\left\{
  \begin{array}{ll}
    m^{h}+\frac{4m^{h+1}-2hm^{2}-4m+2h}{(m-1)^{2}}, & \hbox{if }d=2h  \\[0.2cm]
    2m^{h}+\frac{6m^{h+1}-2m^{h}-(2h+1)m^{2}-4m+2h+1}{(m-1)^{2}}, & \hbox{if }d=2h+1.
  \end{array}
\right.
\end{equation}
This result can be proved by using Theorem \ref{thm:ub}, as shown independently in an earlier version  of the present paper. (An extended abstract of that version can be found in \cite{BVZ}.) A sketch of our proof is as follows.

The order of $T$ is
$$
p = \left\{
\begin{array}{ll}
1+\frac{m+1}{m-1} (m^{h}-1), & \mbox{if } d = 2h \\ [0.2cm]
2 \left(1 + \frac{m}{m-1} (m^{h}-1)\right), & \mbox{if } d = 2h+1.
\end{array}
\right.
$$
Using $1 + 2x + 3x^{2} + \cdots + nx^{n-1} = \frac{nx^{n}}{x-1} - \frac{x^{n}-1}{(x-1)^{2}}$, one can see that the total level of $T$ is
$$
L(T) =\left\{
\begin{array}{ll}
(m+1)\left(\frac{h m^{h}}{(m-1)} - \frac{m^{h}-1}{(m-1)^{2}}\right), & \mbox{if } d = 2h \\ [0.2cm]
2m\left(\frac{h m^{h}}{(m-1)} - \frac{m^{h}-1}{(m-1)^{2}}\right), & \mbox{if } d = 2h+1.
\end{array}
\right.
$$
Using these, one can verify that for $T$ the right-hand sides of (\ref{eq:ub}) and (\ref{rn:levelT}) are equal. Thus to prove (\ref{rn:levelT}) it suffices to prove the existence of an linear order of the vertices of $T$ satisfying the conditions of Theorem \ref{thm:ub}.

\textsf{Case 1:} $T$ has only one central vertex, say $w$.

In this case, $w$ is the unique weight centre of $T$. Denote the children of $w$ by $w^{1}, w^{2}, \ldots, w^{m+1}$. Denote the $m$ children of each $w^{t}$ by $w_{0}^{t}, w_{1}^{t}, \ldots, w_{m-1}^{t}$, $1 \le t \le m+1$. Denote the $m$ children of each $w_{i}^{t}$ by $w_{i0}^{t}, w_{i1}^{t}, \ldots, w_{i(m-1)}^{t}$, $0 \leq i \leq m-1$, $1 \leq t \leq m+1$. Inductively, denote the $m$ children of $w_{i_{1},i_{2},\ldots,i_{l}}^{t}$ ($0 \leq i_{1}, i_{2}, \ldots, i_{l} \leq m-1$, $1 \leq t \leq m+1$) by $w_{i_{1}, i_{2},\ldots,i_{l}, i_{l+1}}^{t}$ where $0 \le i_{l+1} \le m-1$. Continue this until all vertices of $T$ are indexed this way. Rename the vertices of $T$ as follows: For $1 \leq t \leq m+1$, set
$$
v_{j}^{t} := w_{i_{1},i_{2}, \ldots, i_{l}}^{t},\;\, \mbox{where}\;\, j = 1 + i_{1} + i_{2}m + \cdots + i_{l}m^{l-1} + \sum_{l+1 \leq t \leq \lfloor d/2 \rfloor} m^{t}.
$$

Set $u_{0} := w$. For $1 \leq j \leq p-m-2$, let
$$
u_{j} := \left\{
\begin{array}{ll}
v_{s}^{t},\;\,\mbox{where $s = \lceil j/(m+1) \rceil$}, & \mbox{if $j \equiv t$ (mod $(m+1)$) for some $t$ with $1 \le t \le m$} \\ [0.3cm]
v_{s}^{m+1},\;\,\mbox{where $s = \lceil j/(m+1) \rceil$}, & \mbox{if $j \equiv 0$ (mod $(m+1)$)}.
\end{array}
\right.
$$
Let
$$
u_{j} := w^{j-p+m+2},\;\, p-m-1 \leq j \leq p-1.
$$
Note that $u_{p-1} = w^{m+1}$ is adjacent to $w$. Note also that $u_{i}$ and $u_{i+1}$ are in different branches so that $\phi(u_{i},u_{i+1}) = 0$, for $1 \leq i \leq p-2$.

\textsf{Case 2:} $T$ has two adjacent central vertices, say $w$ and $w'$.

In this case, $w$ and $w'$ are also weight centres of $T$. Denote the neighbours of $w$ other than $w'$ by $w_{0}, w_{1}, \ldots, w_{m-1}$ and the neighbours of $w'$ otherwise than $w$ by $w'_{0}, w'_{1}, \ldots, w'_{m-1}$. For $0 \le i \le m-1$, denote the $m$ children of each $w_{i}$ (respectively, $w'_{i}$) by $w_{i0}, w_{i1}, \ldots, w_{i(m-1)}$ (respectively, $w'_{i0}, w'_{i1}, \ldots, w'_{i(m-1)}$). Inductively, for $0 \leq i_{1},i_{2},\ldots,i_{l} \leq m-1$, denote the $m$ children of $w_{i_{1},i_{2}, \ldots, i_{l}}$ (respectively, $w'_{i_{1},i_{2},\ldots,i_{l}}$) by $w_{i_{1},i_{2},\ldots,i_{l},i_{l+1}}$ (respectively, $w'_{i_{1},i_{2},\ldots,i_{l},i_{l+1}}$), where $0 \leq i_{l+1} \leq m-1$. Rename
$$
v_{j} := w_{i_{1},i_{2},\ldots,i_{l}},\;\, v'_{j} := w'_{i_{1},i_{2},\ldots,i_{l}},\;\, \mbox{where}\;\, j = 1 + i_{1} + i_{2}m + \cdots + i_{l}m^{l-1} + \sum_{l+1 \leq t \leq \lfloor d/2 \rfloor} m^{t}.
$$

Let $u_0 := w$ and $u_{p-1} := w'$. For $1 \leq j \leq p-2$, let
\bean
u_j := \left\{
\begin{array}{ll}
v_{s},\;\,\mbox{where $s = \lceil j/2 \rceil$}, & \mbox{if $j \equiv 0$ (mod $2$)}\\ [0.3cm]
v'_{s},\;\,\mbox{where $s = \lceil j/2 \rceil$}, & \mbox{if $j \equiv 1$ (mod $2$)}.
\end{array}
\right.
\eean
Then $u_{i}$ and $u_{i+1}$ are in opposite branches for $1 \leq i \leq p-2$, and $u_{i+2j}$, $j = 0,1, \ldots, (m-1)$  are in different branches for $1 \leq i \leq p-2m+1$, so that $\phi(u_{i},u_{i+1}) = 0$ and $\delta(u_{i},u_{i+1}) = 1$.

In each case above, it can be verified that the linear order $u_{0}, u_{1}, \ldots, u_{p-1}$ satisfies the conditions of Theorem \ref{thm:ub} (details are omitted). So (\ref{rn:levelT}) follows.

The reader is referred to Fig. \ref{Fig3} and \ref{Fig4New} for an illustration of naming, ordering and labeling of the vertices of two internally $3$-regular complete trees of hight 3 by using the above procedure.

\begin{figure}[ht]
\centering
\includegraphics*[scale=0.35]{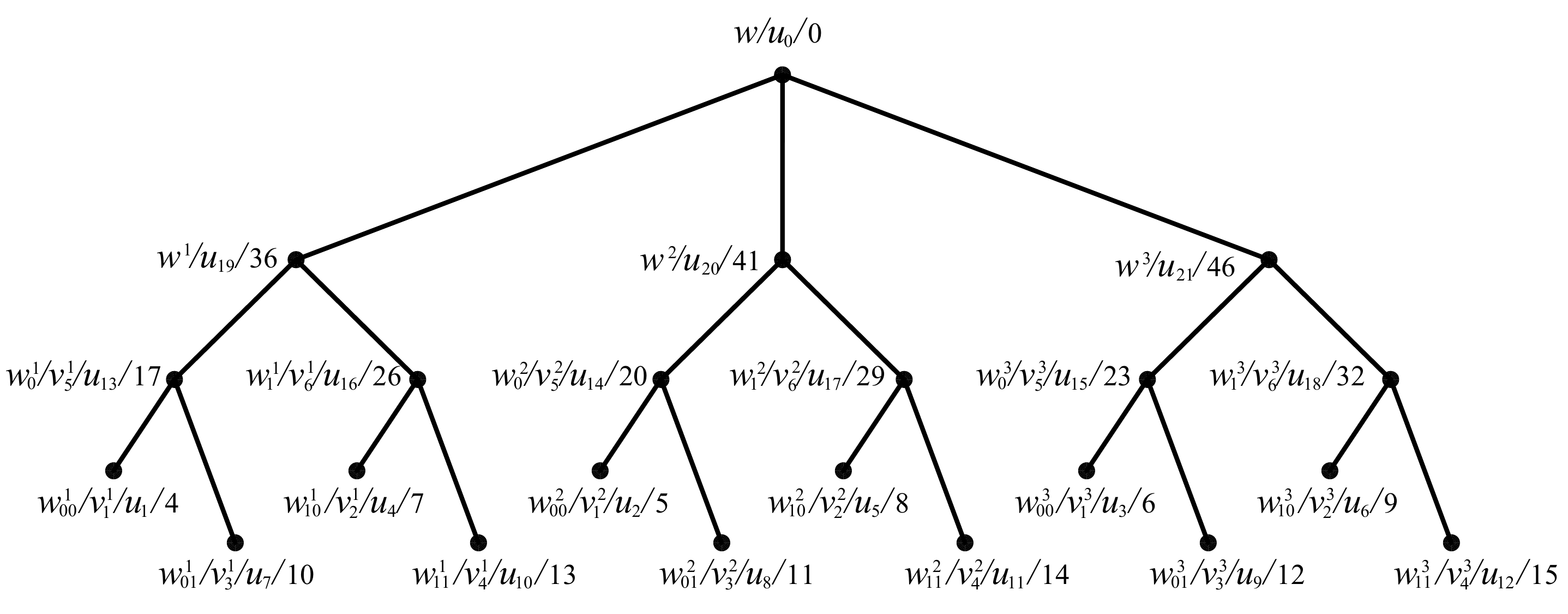}
\caption{\small An optimal radio labeling of the internally 3-regular complete tree with hight 3 and one central vertex.}
\label{Fig3}
\end{figure}

\begin{figure}[ht]
\centering
\includegraphics*[scale=0.35]{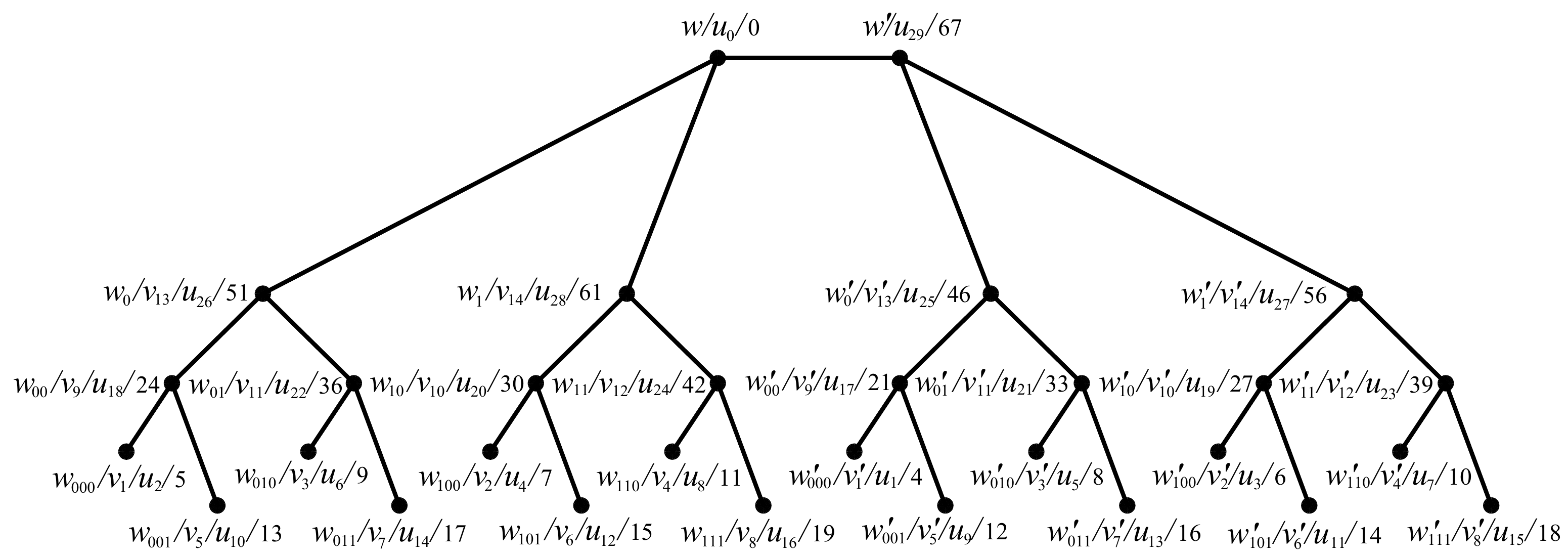}
\caption{\small An optimal radio labeling of the internally 3-regular complete tree with hight 3 and two central vertices.}
\label{Fig4New}
\end{figure}

A \emph{complete $m$-ary tree of hight $k$}, denoted by $T_{k,m}$, is a rooted tree such that each vertex other than leaves has $m$ children and all leaves are at distance $k$ from the root. Li \emph{et al.} \cite[Theorem 2]{Li} proved that, for $m \geq 3$ and $k \geq 2$,
\be
\label{rn:tkm}
\rn(T_{k,m}) = \frac{m^{k+2}+m^{k+1}-2km^{2}+(2k-3)m+1}{(m-1)^{2}}.
\ee
This result can also be proved by using Theorem \ref{thm:ub}. The order, diameter and total level of $T_{k,m}$ are $p = (m^{k+1}-1)/(m-1)$, $d = 2k$ and $L(T_{k,m}) = (km^{k+2}-(k+1)m^{k+1}+m)/(m-1)^{2}$, respectively. Plugging these into (\ref{eq:ub}), one can verify that the right-hand sides of (\ref{eq:ub}) and \eqref{rn:tkm} are identical. It can be verified that the linear order $u_{0}, u_{1}, \ldots, u_{p-1}$ of the vertices of $T_{k,m}$ given in \cite[Section 4.1]{Li} satisfies the conditions of Theorem \ref{thm:ub}. Thus we obtain \eqref{rn:tkm} by Theorem \ref{thm:ub}.

\section*{Acknowledgements}
We appreciate the two anonymous referees for their helpful comments and careful reading. The third author was supported by the Australian Research Council (FT110100629).

\end{document}